\newcommand{\N}{\ensuremath{\mathbb{N}}}
\newcommand{\Z}{\ensuremath{\mathbb{Z}}}
\newcommand{\G}{G=\langle S\rangle}
\newcommand{\bbN}{\ensuremath{\mathbb{N}}}
\newcommand{\Pcal}{\ensuremath{\mathcal{P}}}
\newcommand{\Gammabar}{\ensuremath{\bar{\Gamma}}}
\newcommand{\Aut}{\ensuremath{\mathsf{Aut}}}
\newcommand{\AutG}{\ensuremath{\mathsf{Aut}(\Gamma)}}
\newcommand{\diam}{\ensuremath{\mathsf{diam}}}
\newcommand{\B}{\ensuremath{B^*}}
\newcommand{\C}{\ensuremath{C^*}}
\newcommand{\A}{\ensuremath{A^*}}
\newcommand{\free}{\ast\!\! }
\DeclareMathOperator{\defi}{:\!=}
\theoremstyle{definition}
\theoremstyle{plain}
\newtheorem{theorem}{Theorem}[section]
\newtheorem{lemma}[theorem]{Lemma}
\newtheorem{thm}[theorem]{Theorem}
\newtheorem{coro}[theorem]{Corollary}
\newtheorem{example}[theorem]{Example}
\theoremstyle{remark}
\newenvironment{txteq*}
{
	\begin{equation*}
	\begin{minipage}[t]{0.85\textwidth} % set width to 0.9 x textwidth
	\em                                % switch on emph
}
{\end{minipage}\end{equation*}\ignorespacesafterend}
\title{Two-ended quasi-transitive graphs}
\date{\today}
\author{Babak Miraftab \and Tim R\"uhmann \smallskip \and Department of Mathematics\\
 University of Hamburg\\
 Bundesstra\ss e 55\\
 20146 Hamburg\\
 Germany}
\newcommand{\w}{\omega_1}
\begin{document}
 \maketitle

\begin{abstract}
The well-known characterization of two-ended groups says that every two-ended group can be split over  finite subgroups which means it is isomorphic to either by a free product with amalgamation~$A\ast_C B$ or an HNN-extension~$\ast_{\phi} C$, where~$C$ is a finite group and~$[A:C]=[B:C]=2$ and~$\phi\in\Aut(C)$.
In this paper we show that there is a way in order to spilt two-ended quasi-transitive graphs without dominated ends and two-ended transitive graphs over finite subgraphs in the above sense. 
As an application of it, we characterize all groups acting with finitely many orbits almost freely on those graphs.
 \end{abstract}

\section{Introduction}
End theory plays a crucial role in graph theory, topology and group theory, see the work of Diestel, Halin, Hughes, Ranicki, M\"{o}ller and Wall~\cite{sur2,sur1,halin64,RoggiEndsI,RoggiEndsII,wall-geometry}.
In 1931, Freudenthal~\cite{freu31} defined the concept of ends for topological spaces and topological groups for the first time. 
Let~$X$ be a locally compact Hausdorff space.
In order to define ends of the topological space~$X$, consider infinite sequence~$U_1\supseteq U_2 \supseteq \cdots$ of non-empty connected open subsets of~$X$ such that the boundary of each~$U_i$ is compact and~$\bigcap\overline{U_i}=\emptyset$.
Two sequences~$U_1\supseteq U_2 \supseteq \cdots$ and~$V_1\supseteq V_2 \supseteq \cdots$ are \emph{equivalent} if for every~${i\in\N}$, there are~$j,k\in\N$ in such a way that~$U_i\supseteq V_j$ and~$V_i\supseteq U_k$.
The equivalence classes of those sequences are the \emph{ends} of~$X$.
The ends of groups arose from ends of topological spaces in the work of Hopf~\cite{hopf}.
In 1964, Halin~\cite{halin64}   defined ends(vertex-ends) for infinite  graphs independently as equivalence classes of rays, one way infinite paths.
Diestel and K\"{u}hn~\cite{ends} showed that if we consider locally finite graphs as one dimensional 
simplicial complexes, then these two concepts coincide. 
We can define the number of ends for a given finitely generated group~$G$ as the number of ends of a Cayley graph of~$G$.	
It is known that the number of ends of two Cayley graphs of the same group are equal, as long as the generating sets are finite, see~\cite{mei}.\footnote{Even more stronger, they are quasi-isometric.}
Freudenthal~\cite{freu44} and Hopf~\cite{hopf} proved that the number of ends for infinite groups~$G$ is either 1, 2 or~$\infty$.
Subsequently  Diestel, Jung and M\"oller~\cite{DiestelJungMoeller} extended the above result to arbitrary (not necessarily locally finite) transitive graphs.
They proved that the number of ends of an arbitrary infinite connected transitive graph is either 1,2 or~$\infty$.
In 1943, Hopf~\cite{hopf} characterized two-ended finitely generated groups.
Later, Scott and Wall~\cite{ScottWall} gave another characterization of  two-ended finitely generated groups.
We summarize all of them as the following theorem:
\begin{thm}\label{Classifiication}
	Let~$G$ be a finitely generated group. Then the following statements are equivalent:
	\begin{enumerate}[\rm (i)]
		\item~$G$ is a two-ended group.
		\item Any Cayley graph of~$G\sim_{QI} \Gamma(\Z,\{\pm 1\})$. 
		\item~$G$ has an infinite cyclic subgroup of finite index.
		\item~$G$ is isomorphic to either~$A {{\free\,\,}_{C}}B$ and~$C$ is finite and \\$[A:C]=[B:C]=2$ or~$\ast_\phi C$ with~$C$ is finite and~$\phi \in \Aut( C)$. 
	\end{enumerate} 
\end{thm}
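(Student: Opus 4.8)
The plan is to prove the cycle of implications $(ii)\Rightarrow(i)\Rightarrow(iv)\Rightarrow(iii)\Rightarrow(ii)$; only $(i)\Rightarrow(iv)$ carries real content, and even that is classical (Hopf~\cite{hopf}, Scott--Wall~\cite{ScottWall}), so I shall chiefly indicate how the pieces fit together rather than reprove everything in detail.

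The two ``soft'' implications are coarse geometry. For $(ii)\Rightarrow(i)$: the number of ends of a connected locally finite graph is a quasi-isometry invariant, and $\Gamma(\Z,\{\pm1\})$ is the bi-infinite path, which has exactly two ends; so if some Cayley graph of $G$ is quasi-isometric to it, then $G$ has two ends. For $(iii)\Rightarrow(ii)$: a finite-index subgroup is quasi-isometric to the ambient group via the inclusion, so an infinite cyclic subgroup of finite index gives $G\sim_{QI}\Z$; since any two locally finite Cayley graphs of $G$ are quasi-isometric and the standard Cayley graph of $\Z$ is $\Gamma(\Z,\{\pm1\})$, every Cayley graph of $G$ is quasi-isometric to $\Gamma(\Z,\{\pm1\})$. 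Finally $(iv)\Rightarrow(iii)$ is bookkeeping: in the amalgam case $G=A\ast_C B$ with $C$ finite and $[A:C]=[B:C]=2$ the group $C$ is normal in $A$ and in $B$, hence normal in $G=\langle A,B\rangle$, with $G/C\cong(\Z/2)\ast(\Z/2)=\D$; in the HNN case $G=\ast_\phi C$ the defining relations give $C\trianglelefteq G$ with $G/C\cong\Z$; in either case $G$ is an extension of a finite group by a virtually infinite cyclic one, hence itself virtually infinite cyclic, which is $(iii)$.

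The substance is $(i)\Rightarrow(iv)$. Fix a finite generating set $S$ and let $\Gamma=\Gamma(G,S)$: it is connected, locally finite, vertex-transitive, with exactly two ends $\omega_1,\omega_2$. The action of $G$ on $\{\omega_1,\omega_2\}$ gives $G^+\le G$ of index at most two fixing both ends, and it suffices to treat $G^+$. Pick $r$ so that, writing $C:=B_r(e)$ for the ball of radius $r$ about the base vertex, $\Gamma\setminus C$ has exactly two components $X^-\ni\omega_1$ and $X^+\ni\omega_2$, both infinite. Since $G^+$ is infinite, choose $g\in G^+$ with $d(e,g)>2r$; then $gC=B_r(g)$ is disjoint from $C$, hence contained in $X^+$ or in $X^-$, and after replacing $g$ by $g^{-1}$ we may assume $gC\subseteq X^+$. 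As $C\cup X^-$ is connected, avoids $gC$, and meets $\omega_1$, it lies in the $\omega_1$-component of $\Gamma\setminus gC$; this forces $gX^+\subsetneq X^+$. The slab $M:=X^+\setminus gX^+$ is a union of components of $\Gamma\setminus(C\cup gC)$ other than the two outer ones, and is finite: an infinite such component would carry a ray to $\omega_1$ or to $\omega_2$, but these ends lie in the outer components and distinct complementary components of a finite set share no end (and there are only finitely many components in all). Iterating yields $\Gamma=\bigsqcup_{n\in\Z}g^nM$, so $G=\langle g\rangle\cdot M$ with $M$ finite and $\langle g\rangle\cong\Z$ of finite index: $G$ is virtually infinite cyclic. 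Now the classical structure theory supplies a largest finite normal subgroup $F\trianglelefteq G$ with $G/F\cong\Z$ or $G/F\cong\D$; lifting a generator of $\Z$ (resp.\ the two standard involutions of $\D$) and using the obvious comparison map identifies $G$ with $\ast_\phi F$, $\phi$ being conjugation by the chosen generator (resp.\ with $A\ast_F B$ where $A=\langle F,u\rangle$, $B=\langle F,v\rangle$ and $[A:F]=[B:F]=2$), which is $(iv)$.

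I expect $(i)\Rightarrow(iv)$ to be the main obstacle, and within it two points need care: obtaining the proper inclusion $gX^+\subsetneq X^+$ (that is, a genuine ``translation'' automorphism, not one that merely shifts $C$ inside a bounded region --- this is where one uses that in a two-ended graph the translates of a separator are coarsely nested along a single axis) and verifying that the slab $M$ is finite, which is precisely the feature of two-endedness that forces $\langle g\rangle$ to have finite index. The extraction of the finite normal subgroup $F$ and the identification of $G$ as the amalgam or the HNN extension are then routine, as are the remaining quasi-isometry arguments.
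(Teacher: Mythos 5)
The paper does not actually prove Theorem~\ref{Classifiication}: it is presented as a summary of classical results and attributed to Hopf and to Scott--Wall, with no argument given, so there is no in-paper proof to compare yours against. Your outline is a correct rendition of the standard argument. The three soft implications are handled properly (quasi-isometry invariance of the end space, finite-index subgroups being quasi-isometric to the ambient group, and the normality of $C$ in $A\ast_C B$ resp.\ in $\ast_\phi C$ giving a virtually infinite cyclic quotient), and your translation argument for (i)$\Rightarrow$(iv) is the usual one, with the two delicate points --- properness of $gX^+\subseteq X^+$ and finiteness of the slab --- correctly identified and correctly resolved. Three places where you are deferring detail that a full write-up would need: a ball $B_r(e)$ need not have exactly two complementary components, so $C$ should be taken to be the ball together with the finite components of its complement; the decomposition $\Gamma=\bigsqcup_{n\in\Z}g^nM$ requires the (easy) observation that $\bigcap_{n\geq 0}g^nX^+=\emptyset$, which follows because the pairwise disjoint separators $g^nC$ each lie between $e$ and any vertex of that intersection, forcing such a vertex to have infinite distance from $e$; and the existence of a maximal finite normal subgroup $F$ with $G/F\cong\Z$ or $G/F\cong\D$ is itself a nontrivial piece of the classical theory that you cite rather than prove. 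Since the paper itself simply cites the literature for the whole theorem, your sketch is, if anything, more explicit than the source it is checked against, and I see no genuine gap in it.
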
	

Our aim is to extend the above theorem for quasi-transitive graphs.
The first obstacle is the free product with amalgamations and HNN-extensions, as they are group theoretical notions.
It turns out that tree-amalgamation  is a good approach to generalize the above theorem to two-ended graphs.
In particular it seems that two-ended graphs split over  finite subgraphs via tree-amalgamations.
Indeed we will show that every quasi-transitive graph without dominated end can be expressed as a tree-amalgamation of two rayless graphs, see Theorem \ref{char-two-ended-graph}.
In particular if the graph is a locally finite graph, then it is a tree-amalgamation of two finite graphs in an analogous manner with Theorem \ref{Classifiication}.\\
In 1984, Jung and Watkins~\cite{jung_watkins}  studied groups acting on two-ended transitive graphs.
In this paper, we also generalize the results mentioned above to two-ended quasi-transitive graphs without dominated ends.

%%%%%%%%%%%%%%%%%%%%%%%%%%%%%%%%%%%%%%%%%%%%%%%%%%%%%%%%
%%%%%%%%%%%%%%%%%%%%%%%%%%%%%%%%%%%%%%%%%%%%%%%%%%%%%%%%

\section{Preliminaries}

We refer the readers to~\cite{diestelBook10noEE}  for the notations and the terminologies of graph-theoretical terms and to~\cite{harpe-book} for combinatorial group-theoretical notations.\\
In the following we will recall the most important definitions and notations for the readers convenience. 

\subsection{Graph theory}
Let~$\Gamma$ be a graph with vertex set~$V$ and edge set~$E$.
%Throughout this paper,~$\Gamma$ is always a locally finite two-ended graph.
For a set~$X \subseteq V$ we set~$\Gamma[X]$ to be the induced subgraph of~$\Gamma$ on~$X$. 
A \emph{ray} is a one-way infinite path in a graph, the infinite sub-paths of a ray are its \emph{tails}.
An \emph{end} of a graph is an equivalence class of rays in a graph in which two rays are equivalent if and only if there exists no finite vertex set~$S$ such that after deleting~$S$ those rays have tails completely contained in different components. 
A sequence of finite vertex sets~$(F_i) _{i \in \bbN}$ is a \emph{defining sequence} of an end~$\omega$ if~$C_{i+1} \subsetneq C_{i}$, with~${C_i \defi C(F_i,\omega)}$ and~$\bigcap C_i = \emptyset$. 
We define the \emph{degree of an end~$\omega$} as the supremum over the number of edge-disjoint rays belonging to the class which corresponds to~$\omega$.
We say an end~$\omega$ \emph{lives} in a component~$C$ of~$\Gamma \setminus X$, where~$X$ is a subset of~$V(\Gamma)$ or a subset of~$E(\Gamma)$, when a ray of~$\omega$ has a tail completely contained in~$C$, and we denote~$C$ by~$C(X,\omega)$.
We say a component of a graph is \emph{big} if there is an end which lives in that component. 
Components which are not big are called \emph{small}.
We define~$s(\Gamma)$ to be the maximum number of disjoint double rays in the graph~$\Gamma$. 
An end~$\omega$ of a graph~$\Gamma$ is \emph{dominated by a vertex}~$v$ if there is no finite sets of vertices~${S \setminus v}$ such that~$v \notin C(S,\omega)\cup S$. 
Note that this implies that~$v$ has infinite degree.
An end is said to be~\emph{dominated} if there exists a vertex dominating it. 
%An end which is not dominated is an \emph{undominated} end. 
%
A finite set~$C \subseteq E$ is a \emph{finite cut} if there exists a partition~$(A,A^\ast)$ of~$V$ such that~$C$ are exactly the edges between~$(A,A^\ast)$, which we denote by~$C=E(A,A^\ast)$.
A cut~$C=E(A,A^\ast)$ is the cut \emph{induced} by the partition~$(A,A^\ast)$.
We note that if~$C=E(A,A^\ast)$ is a cut, then the partition~$(gA,gA^\ast)$ induces a cut for every~$g\in \mathsf{Aut}(\Gamma)$.
For the sake of simplicity we denote this new cut only by~$gC$.
A finite cut~$C=E(A,A^\ast)$ is called \emph{tight} if $G[A]$ and $G[\A]$ are connected and moreover if $|E(A,\A)|=k$, then we say that $C$ is $k$-\emph{tight}.\\
A concept similar to cuts is the concept of separations. 
A \emph{separation} is a pair~${(A,A^\ast)}$ with~${A,A^\ast \subseteq V}$ such that~$\Gamma=\Gamma[A]\cup \Gamma[A^\ast]$.
The set~$A \cap A^\ast$ is called the \emph{separator} of this separation. 
The \emph{order} of a separation is the size of its separator. 
In this paper we only consider separations of finite order, thus from here on, any separation will always be a separation of finite order. 
%
%See the work of Carmesin, Diestel, Hundermark and Stein \cite{confing} for applications and results.
For two-ended graphs we call a separation~$\mbox{\emph{$k$-tight}}$ if the following holds: 
\begin{enumerate}
	\item~$|A \cap A^*| = k$.
	\item There is an end~$\omega_A$ living in a component~$C_A$ of~$A \setminus A^\ast$. 
	\item There is an end~$\omega_{A^\ast}$ living in a component~$C_A^\ast$ of~$A^\ast \setminus A$.
	\item Each vertex in~$A \cap A^\ast$ is adjacent to vertices in both~$C_A$ and~$C_{A^\ast}$.  
\end{enumerate}
If a separation~$(A,A^\ast)$ is~$k$-tight for some~$k$ then this separation is just called \emph{tight}.
Note that finding tight separations is  always possible for two-ended graphs. 
In an analogous matter to finite cuts, one may see that~$(gA,gA^\ast)$ is a tight separation for~$g\in \mathsf{Aut}(\Gamma)$ whenever~$(A,A^\ast)$ is a tight separation.
Assume that~${(A,A^\ast)}$ and~${(B,B^\ast)}$ are two separations of~$\Gamma$.
We say~${(A,A^\ast) \leq (B,B^\ast)}$ if and only if~$A \subseteq B$ and~$A^\ast \supseteq B^\ast$.
Let us call~${(A, A^\ast)}$ and~${(B, B^\ast)}$ \emph{nested} if~$(A, A^\ast)$ is comparable with~$(B,B^\ast)$ or with~${(B^\ast,B)}$ under~$\leq$.
A separation~$(A,A^\ast)$ is \emph{connected} if~$\Gamma[A \cap A^\ast]$ is connected. 
%
%Now we have the interesting following lemma which follows from \cite[Proposition 4.2]{ThomassenWoess}:
%\begin{lemma}\label{comparable}
%Let~$C$ be a tight cut of locally finite graph~$\Gamma$, then there are finitely many~$g\in\mathsf{Aut}(\Gamma)$ such that~$gC$ meets~$C$.	
%\end{lemma}	
%
%
%Our definition of the separations allows us to reformulate the above lemma for tight separations. In other words, if~$(A,B)$ is a tight separation, then~$g(A,B)$ is nested with~$(A,B)$ for almost all~$g\in\mathsf{Aut}(\Gamma)$.
Next we recall the definition of the \emph{tree-amalgamation} for graphs which was first defined by Mohar~\cite{Mohar06}. 
We use the tree-amalgamation to obtain a generalization of factoring quasi-transitive graphs in a similar manner to  HNN-extensions or free-products with amalgamation over groups.
%Let us for that recall the definition of a semiregular tree. 
A tree~$T$ is~$(p_1,p_2)$\emph{-semiregular }if there exist~$p_1,p_2 \in \{1,2,\ldots \} \cup \infty$ such that for the canonical bipartition~$\{V_1,V_2\}$ of~$V(T)$ the vertices in~$V_i$ all have degree~$p_i$ for~$i=1,2$. 
In the following let~$T$ be the~$(p_1,p_2)$-semiregular tree.
Suppose that there is a mapping~$c$ which assigns to each edge of~$T$ a pair~$(k,\ell),\, 0 \leq k < p_1,\, 0 \leq \ell <  p_2$, such that for every vertex~$v \in V_1$, all the first coordiantes of the pairs in~{$\{c(e) \mid v$ is incident with~$e\}$} are distinct and take all values in the set~$\{k \mid 0 \leq k < p_1\}$, and for every vertex in~$V_2$, all the second coordiantes are distinct and exhaust all values of the set~$\{\ell \mid 0 \leq \ell < p_2 \}$.
Let~$\Gamma_1$ and~$\Gamma_2$ be graphs. 
Suppose that~$\{S_k \mid 0 \leq k < p_1\}$ is a family of subsets of~$V(\Gamma_1)$, and~${\{ T_\ell \mid 0 \leq \ell < p_2\}}$  is a family of subsets of~${V(\Gamma_2)}$.
We shall assume that all sets~$S_k$ and~$T_\ell$ have the same cardinality, and we let~${\phi_{k\ell}\colon S_k \rightarrow T_\ell}$ be a bijection. 
The maps~$\phi_{k\ell}$ are called  \emph{identifying maps}.
For each vertex~$v \in V_i$, take a copy~$\Gamma_i^v$ of the graph~${\Gamma_i , i = 1, 2}$. 
Denote by~${S_k^v}$ (if~${i = 1}$) and~${T^v_\ell}$ (if~${i = 2}$) the corresponding copies of~$S_k$ or~$T_\ell$ in~${V(\Gamma^v_i)}$. 
Let us take the disjoint union of graphs~${\Gamma^v_i , v \in V_i , i = 1, 2}$. 
For every edge~${st \in E(T ) (s \in V_1, t \in V_2)}$ with~${c(st) = (k, \ell)}$ we identify each vertex~${x \in S^s_k}$
with the vertex~$y = \phi_{k\ell}(x)$ in~$T^t_\ell$. 
The resulting graph~$Y$ is called the \emph{tree-amalgamation} of the graphs~$\Gamma_1$ and~$\Gamma_2$ over the \emph{connecting tree}~$T$.
We denote~$Y$ by~$\Gamma_1\free_T \Gamma_2$.
In the context of tree-amalgamations the sets~$\{S_k\}$ and~$\{T_\ell\}$ are also called \emph{the sets of adhesions} and a single~$S_k$ or~$T_\ell$ might be called an \emph{adhesions} of this tree-amalgamation. 
%In particular the set~$\{S_k\}$ is said to be the set of separators of~$\Gamma_1$ and~$\{T_\ell\}$ to be the set of separators of~$\Gamma_2$. 
In the case that~$\Gamma_1 = \Gamma_2$ and that~$\phi_{k \ell}$ is the identity for all~$k$ and~$\ell$ we may say that~$\{S_k\}$ is the set of adhesions of this tree-amalgamation.
A tree-amalgamation~$\Gamma_1\ast_T\Gamma_2$ is called \emph{thin} if all adhesions are finite and~$T$ is the double ray and moreover if~$\Gamma_1$ and~$\Gamma_2$ are layless, then we call it \emph{strongly thin}.

\subsection{Combinatorial group theory}

Let a group~$G$ act on a set~$X$.
By~$\mathsf{St}_G(x)$, we denote the stabilizer of~$x\in X$, i.e the set of all elements of~$G$ fixing~$x$.
If~$\mathsf{St}_G(x)$ is finite for all $x\in X$, we say that $G$ acts \emph{almost freely} on $X$.

Let~$(X,d_X)$ and~$(Y,d_Y)$ be two metric spaces and let~$\phi\colon X\to Y$ be a map.
The map~$\phi$ is a \emph{quasi-isometric embedding} if there is a constant~$\lambda\geq 1$ such that for all~$x,x^\prime \in X$:
$$\frac{1}{\lambda}d_X(x,x')-\lambda\leq d_Y(\phi(x),\phi(x'))\leq{\lambda}d_X(x,x')+\lambda$$
The map~$\phi$ is called \emph{quasi-dense} if there is a~$\lambda'$ such that for every~$y\in Y$ there exists~$x\in X$ such that~$d_Y(\phi(x),y)\leq \lambda'$.
Finally~$\phi$ is called a \emph{quasi-isometry} if it is both quasi-dense and quasi-isometric embedding.
If~$X$ is quasi-isometric to~$Y$, then we write~$X\sim_{QI} Y$.\\
Remember that~$\G$ can be equipped by the word metric induced by~$S$.
Thus any group can be turned to a topological space by considering its Cayley graph and so we are able to talk about quasi-isometric groups and it would not be ambiguous if we use the notation~${G\sim_{QI} H}$ for two groups~$H$ and~$G$.
Now we have the following important lemma  which reveals the connection between Cayley graphs of a group with different generating sets.

\begin{lemma}{\rm\cite[Theorem 11.37]{mei}}\label{diff generators}
	Let~$G$ be a finitely generated group and let~$S$ and~$T$ be two finite generating sets. 
	Then~$\Gamma(G,S)\sim_{QI}\Gamma(G,T)$.
\end{lemma}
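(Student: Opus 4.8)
The plan is to show that the identity map on $G$, regarded as a map between the two word metric spaces $(G,d_S)$ and $(G,d_T)$ that underlie $\Gamma(G,S)$ and $\Gamma(G,T)$, is a quasi-isometry. Since this map is a bijection on vertex sets it is automatically quasi-dense (with constant $0$), so the only thing left to verify is the two-sided quasi-isometric embedding inequality.

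The key observation is that finiteness of the generating sets lets us compare the two word lengths linearly. Since $T$ generates $G$, each $s\in S$ can be expressed as a word in $T\cup T^{-1}$; let $M$ be the maximum over $s\in S$ of the $T$-length of a fixed such expression for $s$, which is finite because $S$ is finite. Symmetrically, let $N$ be the maximum over $t\in T$ of the $S$-length of $t$. Then for any $g\in G$, writing $g$ as a geodesic $S$-word of length $|g|_S$ and substituting for each letter a $T$-word of length at most $M$ exhibits $g$ as a $T$-word of length at most $M\,|g|_S$, so $|g|_T\le M\,|g|_S$; in the same way $|g|_S\le N\,|g|_T$.

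Translating this into the metrics, for all $g,h\in G$ we get $d_T(g,h)=|g^{-1}h|_T\le M\,|g^{-1}h|_S=M\,d_S(g,h)$ and, symmetrically, $d_S(g,h)\le N\,d_T(g,h)$, hence $\tfrac1N\,d_S(g,h)\le d_T(g,h)\le M\,d_S(g,h)$ for all $g,h$. Putting $\lambda=\max\{M,N,1\}$ turns this into $\tfrac1\lambda d_S(g,h)-\lambda\le d_T(g,h)\le\lambda\,d_S(g,h)+\lambda$, so the identity is a quasi-isometric embedding; being onto, it is a quasi-isometry, and therefore $\Gamma(G,S)\sim_{QI}\Gamma(G,T)$.

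There is really no hard step here: this is the elementary bookkeeping behind the fact that quasi-isometry type is a genuine invariant of a finitely generated group. The one place where a hypothesis is actually used is the finiteness of $S$ and $T$, which is exactly what guarantees that the comparison constants $M$ and $N$ exist; if one of the generating sets were infinite the linear comparison between the two word lengths — and with it the conclusion — could fail.
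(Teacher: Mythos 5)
Your proof is correct and is exactly the standard bi-Lipschitz comparison of word metrics that underlies the cited result; the paper itself gives no proof, simply quoting \cite[Theorem 11.37]{mei}, and the argument there is the same one you give. Nothing to add.
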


By Lemma~\ref{diff generators} we know that any two Cayley graphs of the same group are quasi-isometric if the corresponding generating sets are finite.   
Let~$\G$ be a finitely generated group.
Brick~\cite{Brick} studied the connection  of quasi-isometric groups and their end spaces.
He proved the following important lemma.
\begin{lemma}{\rm\cite[Corollary 2.3]{Brick}}
	\label{endsequall}
	Finitely generated quasi-isometric groups all have the same number of ends. 
\end{lemma}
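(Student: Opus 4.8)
The plan is to work with Cayley graphs and transport ends along the quasi-isometry. By Lemma~\ref{diff generators} all Cayley graphs of a fixed finitely generated group with respect to finite generating sets are pairwise quasi-isometric, so if $G\sim_{QI}H$ then, after fixing finite generating sets $S,T$ and composing the relevant quasi-isometries, one obtains a quasi-isometry $f\colon\Gamma_1\to\Gamma_2$ between the connected locally finite graphs $\Gamma_1:=\Gamma(G,S)$ and $\Gamma_2:=\Gamma(H,T)$; it comes with a quasi-inverse $g\colon\Gamma_2\to\Gamma_1$, itself a quasi-isometry, satisfying $d_{\Gamma_1}(gf(v),v)\le c$ and $d_{\Gamma_2}(fg(w),w)\le c$ for all vertices and some constant $c$. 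Since the number of ends of $G$ (resp.\ $H$) is by definition that of $\Gamma_1$ (resp.\ $\Gamma_2$), it suffices to produce a bijection between their sets of ends.

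First I would push rays forward. Given a ray $R=v_0v_1v_2\cdots$ in $\Gamma_1$, consecutive images satisfy $d_{\Gamma_2}(f(v_i),f(v_{i+1}))\le 2\lambda$, so joining $f(v_i)$ to $f(v_{i+1})$ by a shortest path in $\Gamma_2$ produces a walk $W$ all of whose vertices lie within distance $2\lambda$ of $\{f(v_i)\mid i\in\N\}$. Because $R$ is a ray in a locally finite graph we have $d_{\Gamma_1}(v_0,v_i)\to\infty$, hence $d_{\Gamma_2}(f(v_0),f(v_i))\to\infty$ as $f$ is a quasi-isometric embedding, so $W$ eventually leaves every ball of $\Gamma_2$; using local finiteness (König's Lemma) $W$ then contains a ray, call it $f_\#(R)$, and any two rays extracted from $W$ are equivalent. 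I would then check that $[f_\#(R)]$ depends only on $[R]$: if $R\sim R'$ in $\Gamma_1$ the associated walks are eventually joined by paths of bounded length, which forces the extracted rays to be equivalent in $\Gamma_2$. This yields a well-defined map $f_\ast$ from the end set of $\Gamma_1$ to that of $\Gamma_2$, and applying the same construction to $g$ gives a map $g_\ast$ in the other direction.

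Next I would verify that $g_\ast\circ f_\ast=\mathrm{id}$ and $f_\ast\circ g_\ast=\mathrm{id}$. For a ray $R$ in $\Gamma_1$, the walk from which a representative of $g_\ast(f_\ast([R]))$ is extracted follows the vertices $g(f(v_i))$, each within distance $c$ of $v_i$, so the extracted ray lies at bounded Hausdorff distance from $R$ and is therefore in the same end as $R$; the other composite is handled identically. Hence $f_\ast$ is a bijection, so $\Gamma_1$ and $\Gamma_2$, and therefore $G$ and $H$, have the same number of ends.

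The hard part is the two ``fellow-travelling'' assertions, which carry all the content: (i) that a walk escaping to infinity in a locally finite graph contains a ray, and that all rays extracted this way represent one and the same end; and (ii) that the relation of lying in the same end is preserved under $f_\#$. I would prove (ii) contrapositively: a finite vertex set $F\subseteq V(\Gamma_2)$ separating the tails of $f_\#(R)$ and $f_\#(R')$ would, after pulling $F$ back through $g$ and enlarging the finite set $g(F)$ to a finite set $F'\subseteq V(\Gamma_1)$ large enough to contain the images of all the short linking paths, separate the tails of $R$ and $R'$ in $\Gamma_1$, contradicting $R\sim R'$. Everything else reduces to elementary bookkeeping with the quasi-isometry constants $\lambda,\lambda',c$.
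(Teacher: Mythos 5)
The paper offers no proof of this lemma: it is imported verbatim from Brick's Corollary~2.3, so there is nothing internal to compare against. Your argument is the standard one --- and essentially the one in the cited source: push a ray forward along the quasi-isometry by interpolating geodesics between consecutive images, extract a ray from the resulting escaping walk using local finiteness, and use the quasi-inverse together with the bounded-Hausdorff-distance criterion to show the induced map on ends is a well-defined bijection. The two fellow-travelling claims you isolate are indeed where all the content lies, and your contrapositive treatment of (ii), pulling a separating set back through $g$ and fattening it by a radius depending on $\lambda$ and $c$, closes them correctly; just note that local finiteness of the Cayley graphs (i.e.\ finiteness of the generating sets) is indispensable both for the K\"onig-type extraction of a ray from the walk and for the fattened separator to remain finite, and should be stated explicitly where it is used.
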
	 
\begin{coro}{\rm\cite[Theorem 11.23]{mei}}
	\label{same ends}
	The number of ends of a group~$G$ is independent of choosing generating set.
\end{coro}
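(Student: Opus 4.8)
The plan is to deduce this immediately from the two preceding lemmas. Fix two finite generating sets $S$ and $T$ of $G$ and write $\Gamma_S=\Gamma(G,S)$ and $\Gamma_T=\Gamma(G,T)$; both are connected and locally finite, and they share the common vertex set $G$. Since the number of ends of $G$ is by definition the number of ends of one of its Cayley graphs over a finite generating set, it suffices to prove that $\Gamma_S$ and $\Gamma_T$ have the same number of ends.

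First I would invoke Lemma~\ref{diff generators}, which gives $\Gamma_S\sim_{QI}\Gamma_T$; concretely the identity map on $G$ realises this quasi-isometry, because each element of $T$ is a word of bounded length in $S$ and conversely, so the two word metrics are bi-Lipschitz equivalent. Next I would appeal to the fact, underlying Brick's Lemma~\ref{endsequall}, that the number of ends of a connected locally finite graph is invariant under quasi-isometry; applied to the identity quasi-isometry $\Gamma_S\to\Gamma_T$ this yields that the two Cayley graphs have equally many ends, which is exactly the assertion.

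The only genuine subtlety — and hence the hard part — is to avoid the apparent circularity that Lemma~\ref{endsequall} is phrased for groups while we are precisely establishing that the number of ends of a group is a well-defined invariant. I would circumvent this by running the quasi-isometry invariance of ends at the level of the two explicit graphs rather than the group: a quasi-isometry $f\colon\Gamma_S\to\Gamma_T$ sends every ray to a bounded neighbourhood of a ray, sends equivalent rays to equivalent rays, and admits a quasi-inverse with the same properties, so it induces a bijection between the end sets of $\Gamma_S$ and $\Gamma_T$. Verifying that a finite vertex separator on one side corresponds to a finite separator on the other is where local finiteness of the Cayley graphs enters, but this is routine; once it is in place the corollary follows at once, and one may alternatively simply cite \cite[Theorem 11.23]{mei} for the statement.
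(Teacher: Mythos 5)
Your proposal is correct and matches the paper's (implicit) argument: the paper states this corollary with only a citation, positioned so that it follows immediately from Lemma~\ref{diff generators} and Lemma~\ref{endsequall}, which is exactly the derivation you give. Your extra care about running the quasi-isometry invariance of ends at the level of the two locally finite Cayley graphs, rather than circularly at the level of the group, is a sensible refinement but does not change the route.
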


Next we review the definition of the free product with amalgamation and the HNN-extension.
Let~$G_i$ be three groups such that there are monomorphisms~$\phi_i\colon G_2\to G_i$ for~${i=1,3}$.
Then we denote a \emph{free product with amalgamation}~$G_1$ and~$G_3$ over~$G_2$ and an HNN\emph{-extension} over~$G_2$ by~$G_1\ast_{G_2} G_3$ and~$\ast_{\phi_1}G_1$, respectively.
Finally for a subset~$A$ of a set~$X$ we denote the complement of~$A$ by~$A^c$.
We denote the disjoint union of two sets~$A$ and~$B$ by~$A \sqcup B$.  
%

%%%%%%%%%%%%%%%%%%%%%%%%%%%%%%%%%%%%%%%%%%%%%%%%%%%%%%%%
%%%%%%%%%%%%%%%%%%%%%%%%%%%%%%%%%%%%%%%%%%%%%%%%%%%%%%%%

\section{Characterization of two-ended graphs}

\begin{thm}
	\label{char-two-ended-graph}
	Let~$\Gamma$ be a connected quasi-transitive graph without dominated ends.
	Then the following statements are equivalent:
	\begin{enumerate}[\rm (i)]
		\item~$\Gamma$ is two-ended.
		\item~$\Gamma$ can be split as a strongly thin tree-amalgamation~$\Gammabar \ast_T \Gammabar$ fulfills the following properties: 
		\begin{enumerate}[\rm a)]
			\item~$\Gammabar$ is a connected rayless graph of finite diameter.
			\item The identification maps are all the identity. 
			\item All adhesions of the tree-amalgamation contained in ${\overline{\Gamma}}$ are finite and connected and pairwise disjoint.

		\end{enumerate}	 
		\item~$\Gamma\sim_{QI}$  the double ray. 
	\end{enumerate}
\end{thm}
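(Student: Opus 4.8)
The plan is to prove the cycle of implications (i)$\Rightarrow$(ii)$\Rightarrow$(iii)$\Rightarrow$(i); essentially all of the work sits in (i)$\Rightarrow$(ii), while the other two implications are short.

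For (ii)$\Rightarrow$(iii): if $\Gamma=\Gammabar\ast_T\Gammabar$ is a strongly thin tree-amalgamation satisfying a)--c), then $T$ is the double ray and $\diam(\Gammabar)=D<\infty$. Sending each vertex of a copy $\Gammabar^{v}$ to $v\in V(T)$ gives a map $f\colon V(\Gamma)\to V(T)$ which does not increase distances (adjacent vertices of $\Gamma$ lie in a common copy, or in two copies meeting in an adhesion), while any vertex of $\Gammabar^{v}$ can be joined to any vertex of $\Gammabar^{w}$ by a walk running through the copies along the $v$--$w$ path of $T$ and crossing each in at most $D$ steps, so that $d_\Gamma(x,y)\le (D+1)(d_T(f(x),f(y))+1)$. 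Hence $f$ is a quasi-isometry and $\Gamma\sim_{QI}$ the double ray. For (iii)$\Rightarrow$(i): a quasi-isometry between $\Gamma$ and the double ray forces $\Gamma$ to be two-ended, by the coarse-geometric argument underlying the graph analogue of Theorem~\ref{Classifiication}\,(ii) --- one shows that the number of big components left after deleting a suitable finite set is controlled by the quasi-isometry type: at least two such components must survive (since $\Z$ minus a ball has two unbounded, coarsely separated pieces and $f$ is quasi-dense) and at most two can (three big components would force two of them into the same ray of $\Z$, contradicting the lower quasi-isometry bound), so $\Gamma$ has exactly two ends.

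For (i)$\Rightarrow$(ii), fix the two ends $\omega_1,\omega_2$ of $\Gamma$. Since $\Gamma$ has no dominated ends, there is a connected tight separation $(A,A^\ast)$ separating $\omega_1$ from $\omega_2$, which we take with separator $X=A\cap A^\ast$ of minimum order $k$. The crucial step is to produce a \emph{translation}: an automorphism $g\in\Aut(\Gamma)$ fixing each of $\omega_1,\omega_2$ such that the separations $(g^nA,g^nA^\ast)$, $n\in\Z$, form a strictly descending nested chain with $\bigcap_n g^nA=\emptyset$ and $\bigcup_n g^nA=V(\Gamma)$. I would obtain $g$ as follows: the subgroup of $\Aut(\Gamma)$ fixing each end has index at most $2$, hence still acts with finitely many orbits, so it has a quasi-dense orbit, and this orbit is unbounded because a two-ended graph has infinite diameter; from an automorphism moving a vertex far towards $\omega_1$ one then extracts, after passing to a suitable power, a translation in the sense above. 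This is the quasi-transitive analogue of the fact that a two-ended group contains $\Z$ with finite index, and it is exactly here that the hypothesis ``no dominated ends'' is used. Replacing $g$ by a further power we may moreover assume $gA\cap X=\emptyset$.

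Now set $\Gammabar\defi\Gamma[A\cap gA^\ast]$, the part of $\Gamma$ caught between the separators $X$ and $gX$; with $R_n\defi g^n\Gammabar$ one has $\Gamma=\bigcup_{n\in\Z}R_n$, $R_n\cap R_{n+1}=g^{n+1}X$, and $R_n\cap R_m=\emptyset$ for $|n-m|\ge 2$. One checks that $\Gammabar$ is rayless (a ray in it would be a ray of $\Gamma$ lying simultaneously on the $A$-side of $X$ and the $A^\ast$-side of $gX$, hence in neither $\omega_1$ nor $\omega_2$); that $\Gammabar$ is connected of finite diameter, which in the locally finite case is immediate since a connected locally finite rayless graph is finite, and in general follows from the quasi-transitivity of $\Gamma$ together with the absence of dominated ends; and that the only adhesions inside $\Gammabar$, namely $X$ and $gX$, are finite of order $k$, connected by the choice of a connected separation, and disjoint. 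Finally this decomposition is precisely a tree-amalgamation over the double ray $T$: take $\Gamma_1=\Gamma_2=\Gammabar$ with the two adhesion families given by the images of $X$ and $gX$ under a fixed isomorphism $R_0\cong\Gammabar$, let the identifying maps be the identity under this identification, and label the edges of $T$ so that consecutive copies $R_n,R_{n+1}$ are glued along $g^{n+1}X$; this exhibits $\Gamma$ as the strongly thin tree-amalgamation $\Gammabar\ast_T\Gammabar$ with properties a)--c). I expect the main obstacles to be the construction of the translation $g$ together with the nested exhausting chain of tight separations --- the structural heart of the proof --- and, in the non-locally-finite case, showing that the region $\Gammabar$ between two consecutive tight cuts is connected and of finite diameter; the remaining bookkeeping (tightness, disjointness of adhesions, and matching the decomposition with Mohar's definition so that the identifying maps are the identity) should be routine.
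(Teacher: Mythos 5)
Your plan follows the same route as the paper: for (i)$\Rightarrow$(ii) you produce a translating automorphism $g$, take the strip of $\Gamma$ between a connected finite separator $X$ and its image $gX$, and present $\Gamma$ as the two-way infinite chain of translates of this strip glued along the sets $g^nX$; the implications (ii)$\Rightarrow$(iii) and (iii)$\Rightarrow$(i) are treated exactly as in the paper. However, the two places you yourself flag as ``obstacles'' are where essentially all of the content lies, and your proposal supplies no argument for either. The finite diameter of the strip $\Gammabar$ (property a)) is precisely the paper's Lemma~\ref{smalldiameter}, and its proof is not routine: one shows that if the rayless piece between two separator copies had unbounded diameter, quasi-transitivity would let one map the separator (and hence the piece) properly into itself infinitely often, and the Star--Comb lemma applied to the images of a fixed vertex then produces a ray inside it, the star alternative being excluded by counting paths through a bounded separator. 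Nothing in your write-up replaces this idea. A related, repairable issue: you take $X$ of minimum order, whereas the paper deliberately enlarges the separator to a connected set meeting every orbit (Lemma~\ref{good separation}); the supporting lemmas (in particular Lemma~\ref{smalldiameter} and Lemma~\ref{element infinite order}) are proved for such type~1 separators, so with a minimal $X$ you would have to redo them, e.g.\ via quasi-density of a single orbit. Likewise, your construction of the translation through the index-$\le 2$ end-preserving subgroup is a legitimate alternative to the paper's Lemma~\ref{element infinite order}, but ``one then extracts, after passing to a suitable power, a translation'' is an assertion, not a proof; the paper's case analysis (if $h(C_1)\not\subsetneq C_1$ then $h(C_1)\supseteq C_2$, and a second automorphism $f$ yields $h^{-1}f(C_2)\subsetneq C_2$) is what actually does this work.

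Separately, in (iii)$\Rightarrow$(i) your reason for excluding a third end is incorrect as stated: two disjoint rays of $\Gamma$ whose images converge to the same end of the double ray do \emph{not} contradict the lower quasi-isometry bound, since that bound only converts bounded distance in $R$ into bounded distance in $\Gamma$, which is perfectly consistent with the two rays lying in different big components of $\Gamma\setminus S$. The actual contradiction uses the no-dominated-ends hypothesis: the paired vertices $u_i,v_{j_i}$ of the two rays are at $\Gamma$-distance at most $3\lambda^2$, every $u_i$--$v_{j_i}$ path must cross the finite set $S$, and hence some vertex of $S$ dominates one of the rays. This is the only place that hypothesis enters this implication, and your sketch omits it.
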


In Theorem~\ref{char-two-ended-graph} we characterize graphs which are quasi-isometric to the double ray. 
It is worth mentioning that Kr\"{o}n and M\"{o}ller \cite{kron2008quasi} have studied arbitrary graphs which are quasi-isometric to trees.

Before we can prove Theorem~\ref{char-two-ended-graph} we have to collect some tools used in its proof.
The first tool is the following Lemma~\ref{good separation} which basically states that in a two-ended quasi-transitive graph~$\Gamma$ we can find a separation  fulfilling some nice properties. 
For that let us define a \emph{type 1 separation} of~$\Gamma$ as a separation~${(A,\A)}$ of~$\Gamma$ fulfilling the following conditions:
\begin{enumerate}[\rm (i)]
	
	\item~$A \cap A^\ast$ contains an element from each orbit.
	\item~$\Gamma[A \cap A^\ast]$ is a finite connected subgraph. 
	\item Exactly one component of~$A \setminus A^\ast$ is big. 
\end{enumerate}

\begin{lemma}
	\label{good separation}
	Let~$\Gamma$ be a connected two-ended quasi-transitive graph. 
	Then there exists a type 1 separation  of~$\Gamma$. 
\end{lemma}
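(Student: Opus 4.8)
We want to produce, in a connected two-ended quasi-transitive graph $\Gamma$, a separation $(A,\A)$ whose separator meets every orbit, induces a finite connected subgraph, and has exactly one big side of $A\setminus\A$. My strategy is to start from a known tight separation and then enlarge its separator in a controlled, finite way using the quasi-transitivity.

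\medskip
\noindent\textbf{Step 1: start with a tight separation.}
Since $\Gamma$ is two-ended, it has (as remarked in the Preliminaries) a tight separation $(B,B^\ast)$: a finite separator $B\cap B^\ast$ of some size $k$, with an end $\omega_B$ living in a component $C_B$ of $B\setminus B^\ast$ and an end $\omega_{B^\ast}$ living in a component $C_{B^\ast}$ of $B^\ast\setminus B$. Because $\Gamma$ is two-ended, these are (essentially) the only two ends, so $C_B$ and $C_{B^\ast}$ are the unique big components on their respective sides — this already gives a strong version of property~(iii) for this separation, but only on one side at a time; I will have to be careful that after modification exactly one component of $A\setminus\A$ remains big.

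\medskip
\noindent\textbf{Step 2: make the separator hit every orbit.}
Quasi-transitivity means $V(\Gamma)$ splits into finitely many $\mathsf{Aut}(\Gamma)$-orbits $O_1,\dots,O_m$. For each orbit $O_j$ pick a vertex $v_j$. Choose a finite connected subgraph $K$ of $\Gamma$ containing $B\cap B^\ast$, all the $v_j$, and a finite connecting path between consecutive pieces, so that $\Gamma[K]$ is connected; this is possible since $\Gamma$ is connected. Now define $A := B \cup K$ and, roughly, $\A := B^\ast \cup K$ — more precisely I take the separation whose separator is $K$ together with whatever of $B\cap B^\ast$ and the local structure is needed to keep it a genuine separation. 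The separator of the new separation is finite (a finite union of finite sets), connected by construction, and contains $v_j$ for every $j$, hence meets every orbit: this secures (i) and (ii).

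\medskip
\noindent\textbf{Step 3: restore ``exactly one big component''.}
After enlarging the separator to a finite set $X=A\cap\A$, the graph $\Gamma\setminus X$ still has only finitely many infinite-looking pieces, and since $\Gamma$ has exactly two ends, $\Gamma\setminus X$ has exactly two big components, one containing a tail of $\omega_B$ and one containing a tail of $\omega_{B^\ast}$. I absorb all the small components of $A\setminus\A$ into $\A$ (and symmetrically) so that $A\setminus\A$ retains exactly the one big component on the $\omega_B$ side and $\A\setminus A$ exactly the one on the $\omega_{B^\ast}$ side; absorbing finitely many small components into a side keeps the separator finite and does not disconnect it, since each small component attaches to $X$. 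This gives (iii). It remains to check that $X$ is still connected after these adjustments — it is, because we only moved small components out of $A$ and $\A$, not vertices of $X$ — and that $(A,\A)$ is still a separation, which holds since every edge has both endpoints in $A$ or both in $\A$ or goes through $X$.

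\medskip
\noindent\textbf{Main obstacle.}
The delicate point is Step~3: simultaneously guaranteeing connectedness of the separator \emph{and} exactly one big component, while keeping the separator finite. Enlarging the separator to make it connected and orbit-meeting may create new small components or split a big component's boundary awkwardly; one must argue that after pruning small components the separator $\Gamma[A\cap\A]$ stays connected and finite. I expect the clean way to handle this is to choose $K$ in Step~2 already large enough that $\Gamma\setminus K$ has precisely two big components (possible because two-endedness gives a defining sequence of finite separators each leaving exactly two big components, and we may take $K$ to refine one such), and then only small components need to be absorbed, making the bookkeeping routine.
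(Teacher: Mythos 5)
Your proposal is correct and follows essentially the same route as the paper: take a finite separator that separates the two ends, and enlarge it by finitely many paths into a finite connected set meeting every orbit, with the big component on one fixed side supplying condition (iii). Your Step 3 is largely superfluous --- since $A\setminus A^\ast\subseteq B\setminus B^\ast$ can only host the end $\omega_B$, exactly one of its components is big automatically, so no absorbing of small components is needed.
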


\begin{proof}
	As the two ends of~$\Gamma$ are not equivalent, there is a finite~$S$ such that the ends of~$\Gamma$ live in different components of~$\Gamma \setminus S$. 
	Let~$C$ be a  big component of~$\Gamma \setminus S$. 
	We set~$\bar{A} \defi C \cup S$ and~$\bar{A}^\ast \defi \Gamma \setminus C$ and  obtain a separation~$(\bar{A}, \bar{A}^\ast)$ fulfilling the condition  (iii). 
	Because~${\bar{A} \cap \bar{A}^\ast = S}$ is finite, we only need to add finitely many finite paths to~$\bar{A} \cap \bar{A}^*$ to connect~${\Gamma[\bar{A} \cap \bar{A}^\ast]}$. 
	As~$\Gamma$ is quasi-transitive there are only finitely many orbits of the action of~$\mathsf{Aut}(\Gamma)$ on~$V(\Gamma)$.
	Picking a vertex from each orbit and a path from that vertex to~$\bar{A} \cap \bar{A}^\ast$ yields a separation~$(A,A^\ast)$ fulfilling all the above listed conditions. 
\end{proof}

In the proof of Lemma~\ref{good separation} we start by picking an arbitrary separation which we then extend to obtain type 1 separation. 
The same process can be used when we start with a tight separation, which yields the following corollary: 

\begin{coro}
	\label{type 2 separation}
	Let~$\Gamma$ be a two-ended quasi-transitive graph and let~$(\bar{A},\bar{A}^*)$ be a tight separation of~$\Gamma$.
	Then there is an extension of~$(\bar{A},\bar{A}^*)$ to a type 1 separation~$(A,A^*)$ such that~$\bar{A} \cap \bar{A}^* \subseteq A \cap A^*$. \qed
\end{coro}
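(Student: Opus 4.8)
The statement to prove is Corollary~\ref{type 2 separation}: starting from a tight separation $(\bar A,\bar A^*)$ of a two-ended quasi-transitive graph $\Gamma$, one obtains a type~1 separation $(A,A^*)$ with $\bar A\cap\bar A^*\subseteq A\cap A^*$.

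\bigskip

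The plan is to mimic the proof of Lemma~\ref{good separation} verbatim, with the only modification being that we enlarge the given separator $\bar A\cap\bar A^*$ rather than an arbitrary finite one, so that the inclusion $\bar A\cap\bar A^*\subseteq A\cap A^*$ is automatic. First I would observe that a tight separation already satisfies condition~(iii) of a type~1 separation: by clauses~(2) and~(3) in the definition of $k$-tight separation there is an end living in a component $C_{\bar A}$ of $\bar A\setminus\bar A^*$, so at least one component of $\bar A\setminus\bar A^*$ is big; and since $\Gamma$ is two-ended and the other end lives in $\bar A^*\setminus\bar A$, no second component of $\bar A\setminus\bar A^*$ can be big (such a component would have to contain one of the two ends, but both ends are accounted for). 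Hence \emph{exactly} one component of $\bar A\setminus\bar A^*$ is big, giving~(iii).

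\bigskip

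Next I would repair connectivity of the separator. Since $\bar A\cap\bar A^*$ is finite (it has size $k$) and $\Gamma$ is connected, I can join the finitely many vertices of $\bar A\cap\bar A^*$ by finitely many finite paths in $\Gamma$; adding all vertices of these paths to both sides of the separation produces a separation $(\hat A,\hat A^*)$ with $\bar A\cap\bar A^*\subseteq\hat A\cap\hat A^*$ and $\Gamma[\hat A\cap\hat A^*]$ finite and connected. Adding finitely many vertices to both sides destroys neither the bigness of the (unique) big component—up to removing finitely many vertices the component structure and the ends living in components are unchanged—nor does it create a second big component, so~(ii) and~(iii) still hold. Finally, to secure~(i), I invoke quasi-transitivity: there are only finitely many $\Aut(\Gamma)$-orbits on $V(\Gamma)$; picking one vertex from each orbit and a finite path from it to $\hat A\cap\hat A^*$, and throwing all these vertices into both sides, yields the desired type~1 separation $(A,A^*)$. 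Throughout, the separator only grows, so $\bar A\cap\bar A^*\subseteq A\cap A^*$ is preserved at every step.

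\bigskip

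I do not expect a serious obstacle here; the corollary is essentially a bookkeeping variant of Lemma~\ref{good separation}. The one point that deserves a sentence of care is why enlarging the separator by finitely many vertices cannot turn the side $\bar A^*\setminus\bar A$ into something with no end, or create an extra big component on the $\bar A$ side—both follow from the general fact that the set of ends living in a component of $\Gamma\setminus F$ depends on $F$ only up to finite symmetric difference, together with the hypothesis that $\Gamma$ has exactly two ends. Since the original tight separation already splits the two ends to opposite sides, after enlargement each side still hosts exactly one end, and on each side exactly one component is big.
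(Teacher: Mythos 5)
Your proposal is correct and follows exactly the route the paper intends: the corollary is stated with no separate proof precisely because it is the argument of Lemma~\ref{good separation} rerun starting from the given tight separator, which is what you do. Your extra care about why enlarging the separator by finitely many vertices preserves the big-component structure is a welcome addition that the paper omits, but it does not change the approach.
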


Every separation~$(A,A^*)$ which can be obtained by Corollary~\ref{type 2 separation} is a \emph{type 2 separation}.
We also say that the tight separation~$(\bar{A},\bar{A}^*)$ induces the type 2 separation~$(A,A^*)$.

In Lemma~\ref{far away} we prove that in a quasi-transitive graph without dominated ends there are vertices which have arbitrarily large distances from one another.
This is very useful as it allows to map separators of type 1 separations far enough into big components, such that the image and the preimage of that separation are disjoint.

\begin{lemma}
	\label{far away}
	Let~$\Gamma$ be a connected  two-ended quasi-transitive graph without dominated ends, and let~$(A,A^\ast)$ be a type 1 separation.
	Then for every~$k \in \N$ there is a vertex in each  big component of~$\Gamma \setminus (A \cap A^\ast)$ that has distance at least~$k$ from~$A\cap A^\ast$. 
\end{lemma}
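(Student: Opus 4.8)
The plan is to argue by contradiction: suppose there is some $k \in \N$ and some big component $C$ of $\Gamma \setminus (A \cap A^\ast)$ in which every vertex lies within distance $k$ of the finite separator $S \defi A \cap A^\ast$. First I would observe that then $C$ is contained in the ball $B_k(S)$ of radius $k$ around $S$. The key point is that $B_k(S)$ need not be finite a priori — the graph is only quasi-transitive, not locally finite — so finiteness of $C$ does not come for free. However, since $C$ is big, some end $\omega$ of $\Gamma$ lives in $C$, and I want to derive that $\omega$ must then be dominated, contradicting the hypothesis.

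The heart of the argument is the following. Since $C \subseteq B_k(S)$ and $S$ is finite, pick a ray $R$ in $\omega$; after a tail we may assume $R \subseteq C$, so every vertex of $R$ is within distance $k$ of $S$. By quasi-transitivity there are finitely many orbits, hence there is a uniform bound $N$ on the number of orbit representatives; combined with the fact that for each vertex $v \in S$ the set of vertices at distance exactly $k$ from $v$ along a fixed geodesic is controlled, I would aim to show that the map sending each vertex of $R$ to (a choice of) a nearest vertex of $S$ together with a geodesic of length $\le k$ realizing this has bounded "fibre complexity." More precisely, since $S$ is finite and $R$ is infinite, infinitely many vertices of $R$ are attached to the same vertex $s \in S$ by a geodesic of length $\le k$; call this infinite subset $R'$. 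I then want to conclude that $s$ dominates $\omega$: for any finite vertex set $F$ with $s \notin F$, I must show $s$ and a tail of $R$ lie in the same component of $\Gamma \setminus F$. This is where the uniform bound enters — the issue is that the short geodesics from $R'$ to $s$ might all pass through $F$.

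The main obstacle, therefore, is upgrading "infinitely many vertices of $R$ are within distance $k$ of a fixed $s$" to "$s$ dominates $\omega$." The standard way around this is a pigeonhole/Ramsey-type refinement: among the infinitely many $(v, P_v)$ with $v \in R'$ and $P_v$ a geodesic of length $\le k$ from $v$ to $s$, first pass to an infinite subfamily with all $P_v$ of the same length $\ell \le k$, then iteratively pass to infinite subfamilies on which the $j$-th vertices of the $P_v$ (counting from $s$) either coincide for all $v$ or are pairwise distinct, for each $j = 1, \dots, \ell$. On the resulting infinite subfamily, the paths $P_v$ share a common initial segment from $s$ to some vertex $s'$ and are otherwise internally disjoint, giving infinitely many internally disjoint $s'$–$R$ paths of bounded length. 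Then for any finite $F \not\ni s$: if $s' \notin F$, all but finitely many of these paths avoid $F$ (each meets $F$ in at most $\ell$ vertices, $F$ is finite, so only finitely many can be hit), connecting $s$ to a tail of $R \subseteq C(F,\omega)$; if $s' \in F$, replace $s$ by $s'$ — but one checks $s'$ is within distance $k$ of $S$ as well, and in fact the cleanest route is to note we may take $s' = s$ from the start by choosing the $P_v$ to be geodesics \emph{from $s$}, so that $s' = s$ and the common segment is trivial. Hence $s$ dominates $\omega$, contradicting that $\Gamma$ has no dominated ends, and the lemma follows. I would also remark that the separator being finite (guaranteed since $(A,A^\ast)$ is a type 1 separation, condition (ii)) is exactly what makes the pigeonhole step available, and this is the only place the "no dominated ends" hypothesis of the ambient theorem is genuinely used in this lemma.
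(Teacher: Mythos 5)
Your proof is correct and reaches the same contradiction as the paper --- a vertex dominating the end that lives in $C$ --- but the combinatorial extraction is different. The paper builds a forest $T=\bigcup T_i$ by routing a shortest path from each ray vertex $r_i$ to $S$ and truncating each new path where it first meets the forest already built; every component of $T$ then has diameter at most $2k$, so (as $S$ is finite) some component is an infinite tree of bounded diameter and therefore contains a vertex $u$ of infinite degree, which dominates the ray. You instead fix a single target $s\in S$ by pigeonhole and run a sunflower-type refinement on the bounded-length geodesics from the ray to $s$: first equalize the length, then decide position by position whether the vertices all coincide or can be made pairwise distinct. This buys a more explicit justification of why the resulting fan of $s'$--$R$ paths is internally disjoint (distinct positions on a geodesic to $s$ have distinct distances to $s$), a point the paper passes over rather quickly when it asserts that the infinitely many $u$--$R$ paths meet only in $u$; the price is the iterated refinement in place of the clean ``infinite bounded-diameter tree has a vertex of infinite degree'' step. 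Two small repairs to your write-up: the set of positions at which the refined paths coincide need not be an initial segment, so you should define $s'$ as the common vertex at the \emph{last} coinciding position rather than speaking of a common initial segment; and in the final step you should simply conclude that $s'$ dominates $\omega$ --- there is no need (and no clean way) to transfer the domination back to $s$, since the existence of any dominated end already contradicts the hypothesis.
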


\begin{proof}
	Let~$\Gamma$ and~$(A,A^\ast)$ be given and set~$S \defi A \cap A^\ast$. 
	Additionally let~$\omega$ be an end of~$\Gamma$ and set~$C\defi C(S,\omega)$.
	For a contradiction let us assume that there is a~$k \in \bbN$ such that every vertex of~$C$ has distance at most~$k$ from~$S$. 
	Let~$R=r_1,r_2, \ldots$ be a ray belonging to~$\omega$. 
	%If there is an infinite pairwise disjoint subset of~$\Pcal$ we are done as~$S$ is finite and thus there is vertex in~$S$ dominating~$R$ and hence the end to which~$R$ belongs.
	We now define a forest~$T$ as a sequence of forests~$T_i$. 
	Let~$T_1$ be a path from~$r_1$ to~$S$ realizing the distance of~$r_1$ and~$S$, i.e.~$T_1$ is a shortest path between~$r_1$ and~$S$. 
	Assume that~$T_i$ is defined. 
	To define~$T_{i+1}$ we start in the vertex~$r_{i+1}$ and follow a shortest path from~$r_{i+1}$ to~$S$.
	Either this path meets a vertex contained in~$T_{i}$, say~$v_{i+1}$, or it does not meet any vertex contained in~$T_{i}$. 
	In the first case let~$P_{i+1}$ be the path from~$r_{i+1}$ to~$v_{i+1}$.
	% and set~$T_{i+1}$ to be~$T_i \cup P_{i+1}$.
	In the second case we take the entire path as~$P_{i+1}$.
	Set~$T_{i+1} \defi~T_i \cup P_{i+1}$. 
	Note that all~$T_i$ are forests by construction.
	For a vertex~$v \in T_i$ let~$d_i(v,S)$ be the length of a shortest path in~$T_i$ from~$v$ to any vertex in~$S$. 
	Note that as each component of each~$T_i$ contains at exactly one vertex of~$S$ by construction, this is always well-defined. 
	Let~$P=r_i,x_1,x_2,\ldots,x_n,s$ with~$s \in S$ be a shortest path between~$r_i$ and~$S$. 
	As~$P$ is a shortest path between~$r_i$ and~$S$ the subpath of~$P$ starting in~$x_j$ and going to~$s$ is a shortest~$x_j-s$ path. 
	This implies that for~$v$ of any~$T_i$ we have~$d_i(v,S)\leq k$. 
	We now conclude that the diameter of all components of~$T_i$ is at most~$2k$ and hence each component of~${T \defi \bigcup T_i}$ also has diameter at most~$2k$, furthermore note that~$T$ is a forest. 
	As~$S$ is finite there is an infinite component of~$T$, say~$T^\prime$. 
	As~$T^\prime$ is an infinite tree of bounded diameter it contains a vertex of infinite degree, say~$u$. 
	So there are infinitely many paths from~$u$ to~$R$ which only meet in~$u$. 
	But this implies that~$u$ is dominating the ray~$R$, a contradiction.
\end{proof}

Our next tool used in the proof of Theorem~\ref{char-two-ended-graph} is Lemma~\ref{smalldiameter} which basically states that small components have  small diameter. 

\begin{lemma}
	\label{smalldiameter}
	Let~$\Gamma$ be a connected two-ended quasi-transitive graphs without dominated ends.
	Additionally let~$S=S_1 \cup S_2$ be a finite vertex set such that the following holds:
	\begin{enumerate}[\rm (i)]
		\item~$S_1 \cap S_2 = \emptyset$.
		\item~$\Gamma[S_i]$ is connected for~$i=1,2$. 
		\item~$S_i$ contains an element from of each orbit for~$i=1,2$.
		
	\end{enumerate} 
	Let~$H$ be a rayless component of~$\Gamma \setminus S$. 
	Then~$H$ has  finite diameter. 
\end{lemma}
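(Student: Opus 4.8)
The plan is to argue by contradiction: suppose $H$ is a rayless component of $\Gamma\setminus S$ of infinite diameter. Since $H$ is rayless, by a standard compactness/König-type argument $H$ cannot contain vertices arbitrarily far apart along internally disjoint structure without forcing a ray — so the infinite diameter must be ``witnessed'' by longer and longer geodesics, but these cannot accumulate to a ray in $H$. First I would fix a vertex $h_0\in H$ and, for each $n\in\N$, pick $h_n\in H$ with $d_\Gamma(h_n,h_0)\ge n$ (possible since $\diam(H)=\infty$), together with a geodesic $P_n$ from $h_0$ to $h_n$ inside $\Gamma$. The key point is that because $S$ is finite and $S_1,S_2$ each meet every orbit with $\Gamma[S_i]$ connected, the graph has a translation-like symmetry: there is an automorphism (or a bounded-length ``shift'') moving $S_1$ off itself into the big component, analogous to how Lemma~\ref{far away} is used. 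I would exploit quasi-transitivity to get, for each vertex $v$ of $H$, an automorphism $g_v$ with $g_v(v)$ lying in one of the two fixed finite sets $S_1\cup S_2$ (up to bounded distance), so that $H$ near $v$ looks like $H$ near a vertex of $S$.

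The crux of the argument I expect runs as follows. If $H$ had infinite diameter, then by quasi-transitivity the automorphisms realizing the finitely many orbits, applied to a long geodesic $P_n\subseteq H\cup S$, would push a subpath of $P_n$ entirely into $H$ and away from $S$ (using Lemma~\ref{far away} to make room in the big components, and the fact that $S_1,S_2$ are disjoint and each orbit-complete so that one can slide one copy of the separator along). Concatenating such shifted subpaths, or rather taking the limit of a nested sequence of geodesic segments fixed by longer and longer words, would produce a ray contained in $H$, contradicting that $H$ is rayless. Concretely: I would show that a rayless graph of finite max-degree has finite diameter is false in general, so I genuinely need the group action — the claim is really that $H$ of infinite diameter plus quasi-transitivity of the \emph{ambient} $\Gamma$ forces a ray \emph{in $H$}, because a long geodesic in $H$ can be translated to overlap with a copy of itself shifted along the ``axis'' of the two-ended graph, and the union of all these translates stays in $H$ while being unbounded, hence (again rayless $\Rightarrow$ bounded-degree-along-any-end type reasoning) yields either a ray in $H$ or a vertex of infinite degree dominating an end, both contradictions.

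The main obstacle, I expect, is making precise the step ``translate a long geodesic of $H$ so that it stays inside $H$.'' A priori an automorphism $g$ of $\Gamma$ need not map $H$ into $H$ — it could map $H$ across $S$ into the big components. The resolution should be: take $g$ to be (a power of) a ``translation'' along the two ends, chosen via Lemma~\ref{far away} so that $g(S)$ lands deep inside one big component $C$, far from $S$; then $g(H)$ is a component of $\Gamma\setminus g(S)$, and since $H$ is small (rayless, so it lives in no end) while $C$ is big, $g(H)$ is a small component hanging off $g(S)$ inside $C$. One then compares $g(H)$ with $H$ directly: if $\diam(H)=\infty$ then $\diam(g(H))=\infty$ too, but $g(H)\subseteq C\subseteq $ big component of $\Gamma\setminus S$, and a small component of infinite diameter sitting inside a big component contradicts either raylessness of that small component or the no-dominated-ends hypothesis, by exactly the forest-of-shortest-paths construction used in the proof of Lemma~\ref{far away}. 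So in fact I would try to reduce this lemma to the technique of Lemma~\ref{far away}: build the forest $T$ of shortest paths from the vertices of $H$ to $S$ (or to $S_i$), note $H$ rayless forces $T$ to have a component that is an infinite tree, and if that tree has unbounded diameter we either extract a ray in $H$ (contradiction) or find a vertex of infinite degree dominating an end (contradiction). The delicate bookkeeping — ensuring the shortest paths stay in $H\cup S$, that the forest components each meet $S$ in a controlled way, and that ``unbounded diameter of $H$'' really propagates to ``unbounded diameter of some component of $T$'' — is where the real work lies.
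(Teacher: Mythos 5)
There is a genuine gap, and it sits exactly where you yourself locate ``the real work'': in extracting a contradiction from a rayless component of infinite diameter. Your proposed contradictions do not hold. You repeatedly claim that the forest-of-shortest-paths construction from Lemma~\ref{far away} yields ``either a ray in $H$ or a vertex of infinite degree dominating an end.'' But that construction derives domination only because the component considered there is \emph{big}: the infinite-degree vertex of the forest sends infinitely many disjoint paths to a ray $R$ living in that component, and the forest components have diameter at most $2k$ (all vertices being within distance $k$ of $S$ by the assumption being contradicted there), which is what forces infinite degree rather than a ray. In your situation both ingredients fail: $H$ is rayless, so there is no ray for an infinite-degree vertex to dominate, and the shortest paths from vertices of $H$ to $S$ can be arbitrarily long, so the forest components need not have bounded diameter. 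An infinite rayless tree of unbounded diameter (a ``spider'' with legs of lengths $1,2,3,\dots$) has a vertex of infinite degree yet dominates nothing, and since $\Gamma$ is not assumed locally finite nothing in the hypotheses excludes this configuration directly. Likewise, your intermediate assertion that ``a small component of infinite diameter sitting inside a big component'' is already absurd is essentially the statement of the lemma itself, not something you may invoke.

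The paper closes this gap differently, and the direction of the translation matters. Since $\diam(H)$ is assumed infinite, one picks $w\in H$ far from $S$ \emph{inside $H$} and an automorphism $g$ with $w\in gS$ (possible because $S_i$ meets every orbit), and shows $gS\subsetneq H$ and $gH\subsetneq H$ --- not, as you propose, $gS$ deep in a big component $C$ (where it is in any case unclear that $g(H)\subseteq C$, since $g(H)$ has infinite diameter and could reach back across $S$). Iterating gives a nested sequence of copies of $H$ and hence an infinite set $U\subseteq H$ of images of a fixed vertex $v\in S$ lying at unbounded distance from $S$. Applying the Star-Comb lemma to $H$ and $U$, the star case is excluded not by domination but by counting against a finite separator: the center $x$ lies at some finite distance $d_x$ from $S$, some step of the iteration uses $g_x$ with $d(S,g_xS)>d_x$, and $|S|+1$ leaves produced after that step would need $|S|+1$ internally disjoint paths to $x$ all passing through the separator $g_xS$ of size $|S|$. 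Hence one obtains a comb, i.e.\ a ray in $H$, the desired contradiction. This separator-counting step is the idea missing from your plan.
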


\begin{proof}
	Let~$\Gamma,S$ and~$H$ be given. 
	Assume for a contradiction that~$H$ has unbounded diameter. 
	We are going to find a ray inside of~$H$ to obtain a contradiction.
	Our first aim is to find a~$g \in \Aut(\Gamma)$ such that the following holds:
	\begin{enumerate}[(i)]
		\item~$gS_i \subsetneq H$ 
		\item~$gH \subsetneq H$. 
	\end{enumerate}
	Let~$d_m$ be the maximal diameter of the~$S_i$, and let~$d_d$ be the distance between~$S_1$ and~$S_2$.
	Finally let~$d_S = d_d+2d_m$.
	
	First assume that~$H$ only has neighbors in exactly one~$S_i$.
	This implies that~$\Gamma \setminus H$ is connected. 
	Let~$w$ be a vertex in~$H$ of distance greater than~$2d_S$ from~$S$ and let~$g \in \mathsf{Aut}(\Gamma)$ such that~$w\in gS$. 
	This implies that~$gS \subsetneq H$.
	But as~$\Gamma \setminus H$ contains a ray, we can conclude that~$gH \subsetneq H$.
	Otherwise~$gH$ would contain a ray, as~$\Gamma \setminus H$ contains a ray and is connected. 
	
	So let us now assume that~$H$ has  a neighbor in both~$S_i$. 
	Let~$P$ be a shortest~$S_1-S_2$ path contained in~$H \bigcup (S_1 \cup S_2)$, say~$P$ has length~$k$.  
	We pick a vertex~$w \in H$ of distance at least~$2d_S +k+1$ from~$S$, and we pick a~$g \in \Aut(\Gamma)$ such that~$w \in gS$. 
	Obviously we know that~$gP \subseteq (gH \cup gS)$. 
	By the choice of~$g$ we also know that~$gP \subseteq H$. 
	This yields that~$gH \subseteq H$, as~$gH$ is small.  
	We can conclude that~$gH \neq H$ and hence~$gS_i \subsetneq H$ follows directly by our choice of~$g$. 
	
	Note that as $gH$ is a component of $\Gamma \setminus gS$ fulfilling all conditions we had
	on $H$ we can iterate the above defined process with $gH$ instead of $H$.
	We can now pick a vertex $v\in S$. 
	Let $U$ be the images of $v$. 
	As $H$ is connected we apply the Star-Comb lemma, see \cite[Lemma 8.2.2.]{diestelBook10noEE}, to $H$ and $U$.
	We now show, that the
	result of the Star-Comb lemma cannot be a star. So assume that we obtain
	a star with center $x$. Let $\ell:= |S|$.
	Let $d_X$ be the distance from $S$ to $x$.
	By our construction we know that there is a step in which we use a $g_x\in \Aut(G)$
	such that $d(S, g_xS) > d_x$. 
	Now pick $\ell+ 1$ many leaves of the star which come from steps in the process after we used $g_x$. 
	This implies that in the star, all the paths from those $\ell+1$ many leaves to $x$ have to path through a separator
	of size $\ell$, which is a contradiction. So the Star-Comb lemma yields a comb
	and hence a ray.

\end{proof}

\begin{lemma}
	\label{element infinite order}
	Let~$\Gamma$ be a two-ended connected quasi-transitive graph without dominated ends and let~$(A,A^\ast)$ be a type 1 separation and let~$C$ be the big component of~$A \setminus A^\ast$. 
	Then there is a~$g \in \AutG$ such that~$g(C) \subsetneq C$. 
\end{lemma}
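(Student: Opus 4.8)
The plan is to find an automorphism that "shifts" the graph along its two ends, and then to show that after replacing it by a suitable power it maps the big component $C$ strictly into itself. Let $S \defi A \cap A^\ast$, let $\omega$ be the end living in $C$, and let $\omega^\ast$ be the other end of $\Gamma$. First I would use Lemma~\ref{far away}: for every $k$ there is a vertex $v_k \in C$ with $d(v_k, S) \geq k$. Since $\Gamma$ is quasi-transitive there are finitely many orbits, so by pigeonhole there is a single orbit containing vertices arbitrarily deep inside $C$; fix $g_k \in \AutG$ with $g_k(s_0) = v_k$ for a fixed $s_0 \in S$ and $k$ large (to be specified). The image $g_k(S)$ then lies entirely inside $C$, since $\Gamma[S]$ has finite diameter $d_S$ and $k$ can be chosen larger than $d_S$; hence $g_k(S) \subseteq C$ and in particular $g_k(S) \cap S = \emptyset$.

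Next I would analyse how $g_k$ moves the two ends. Because $g_k(S) \subseteq C = C(S,\omega)$ and $g_k$ is a graph automorphism, $g_k$ sends the separation $(A, A^\ast)$ to a nested separation $(g_kA, g_kA^\ast)$ with separator $g_k(S)$ deep inside $C$; consequently one of the two components of $\Gamma \setminus g_k(S)$ is contained in $C$ and contains exactly one of the two ends. I would argue that $g_k$ cannot fix both ends: if $g_k$ fixed $\omega^\ast$, then it would fix the component $C^\ast = C(S,\omega^\ast)$ "setwise up to finite error", but $g_k(C^\ast)$ is on the far side of $g_k(S) \subseteq C$, hence $g_k(C^\ast) \subsetneq C$, which is incompatible with $g_k$ preserving $\omega^\ast$ unless we are careful about orientation. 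The clean way to phrase this: the set of tight separations of $\Gamma$ is linearly ordered (this is the standard structure of a two-ended graph — the separators can be arranged along a line), and $\AutG$ acts on this line; the element $g_k$, moving $S$ a distance $\geq k$ away from itself with disjoint image, must act as a nontrivial translation on this line for $k$ large enough, rather than as a reflection (a reflection would fix a bounded region, contradicting that $S$ and $g_k(S)$ are far apart on the line once $k$ exceeds the diameter of that bounded region). Replacing $g_k$ by $g_k^{-1}$ if necessary, we get $g \defi g_k$ translating "towards $\omega$", which gives $g(C) \subseteq C$, and since $S \not\subseteq g(C)$ (as $g(S) \subseteq C$ but also $g$ moves $S$ off itself, and $C$ together with its reflection side partition consistently) we get the strict inclusion $g(C) \subsetneq C$.

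The main obstacle I anticipate is making the "linear order on tight separations" argument precise enough to conclude that $g_k$ acts as a translation and not a reflection — equivalently, ruling out that $g_k$ swaps $\omega$ and $\omega^\ast$. The cleanest fix: if $g_k$ swapped the two ends it would have a bounded "axis" (the separators fixed setwise lie within bounded distance of each other), so $d(S, g_k S)$ would be bounded independently of $k$; but we chose $k$ arbitrarily large with $d(v_k, S) \geq k$ and $v_k \in g_k(S)$, contradiction. Hence for $k$ large $g_k$ preserves each end, and then $g_k(S) \subseteq C$ forces, with the correct choice between $g_k$ and $g_k^{-1}$, that $g \defi g_k^{\pm 1}$ satisfies $g(C) \subsetneq C$: indeed $g(A) \subseteq A$ with $g(S) \subseteq C \subseteq A \setminus A^\ast$, so $g(C) = g(A) \setminus g(S)$ misses $S$ entirely and lies in the big side, giving $g(C) \subsetneq C$ since $C \setminus g(C)$ contains at least the nonempty finite region between $S$ and $g(S)$.
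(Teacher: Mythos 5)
Your overall architecture (push $S \defi A\cap A^\ast$ deep into $C$ via Lemma~\ref{far away} and quasi-transitivity, observe that the image separator lies wholly inside $C$, then conclude $g(C)\subsetneq C$ once $g$ is known to preserve each end) matches the paper's strategy, and your final step --- that an end-preserving $g$ with $g(S)\subseteq C$ satisfies $g(C)\subsetneq C$ --- is sound. The gap is in how you rule out the ``reflection'' case. You argue: if $g_k$ swapped the two ends, its axis (the setwise-fixed tight separators) would be bounded, \emph{hence} $d(S,g_kS)$ would be bounded independently of $k$. That inference is false. An end-swapping automorphism can move a given separator arbitrarily far: on the double ray $\Z$ the reflection $x\mapsto -x+n$ swaps the two ends, has a perfectly bounded axis near $n/2$, and yet sends $\{0\}$ to $\{n\}$. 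The axis is bounded in diameter but its \emph{location} depends on $g_k$ and sits roughly halfway between $S$ and $g_k S$; nothing forces it to be near $S$. So for large $k$ you cannot conclude that $g_k$ preserves the ends, and if it swaps them then $g_k(C)$ is the component of $\Gamma\setminus g_kS$ containing $S$ and $C^\ast$, which is not contained in $C$ (and $g_k^2$ may well be the identity, so passing to powers does not help).

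The paper closes exactly this hole with the classical ``product of two reflections is a translation'' trick: it picks \emph{two} automorphisms $h$ and $f$ carrying $S$ to disjoint separators at different depths inside $C$ (distances $d+1$ and $3d+1$, with $d=\diam(S)$). If either one already maps the big component strictly into itself, it is the desired $g$; otherwise both ``flip'' (i.e.\ $h(C_1)\supseteq C_2$ and $f(C_1)\supseteq C_2$), and then the composition $h^{-1}f$ does translate, giving $h^{-1}f(C_2)\subsetneq C_2$. To repair your proof you need either this composition device or some other genuine argument that an end-preserving automorphism with the required displacement exists; the bounded-axis claim as stated does not provide one.
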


\begin{proof}
	Let~$\Gamma$ be a two-ended connected quasi-transitive graph without dominated ends and let~$(A,A^\ast)$ be a type 1 separation of~$\Gamma$.
	Set~${d \defi \diam(A \cap A^\ast)}$. 
	Say the ends of~$\Gamma$ are~$\omega_1$ and~$\omega_2$ and set~${C_i \defi C(A \cap A^*,\omega_i)}$.
	Our goal now is to find an automorphism~$g$ such that~$g(C_1) \subsetneq C_1$. 
	
	To find the desired automorphism~$g$ first pick a vertex~$v$ of distance~${d+1}$ from~${A \cap A^\ast}$ in~$C_1$. 
	As~$(A,A^\ast)$ is a type 1 separation of the quasi-transitive graph~$\Gamma$ there is an automorphism~$h$ of~$\Gamma$ that maps a vertex of~$A \cap A^\ast$ to~$v$. 
	Because~${\Gamma[A \cap A^\ast]}$ is connected and because~${d(v,A \cap A^\ast) \geq d+1}$ we can conclude that~$(A \cap A^\ast)$ and~${h (A \cap A^\ast)}$ are disjoint. 
	If~$h(C_1) \subsetneq C_1$ we can choose~$g$ to be~$h$, so let us assume that~$h(C_1) \supseteq C_2$. 
	Now pick a vertex~$w$ in~$C_1$ of distance at least~$3d+1$ from~$A \cap A^\ast$, which is again possible by Lemma~\ref{far away}. 
	Let~$f$ be an automorphism such that~${w \in f(A \cap A^\ast)}$. 
	Because~${d(w,A \cap A^\ast) \geq 3d+1}$ we can conclude that~
	$${A \cap A^\ast, ~h(A\cap A^\ast)} \mbox{ and } {f(A \cap A^\ast)}$$ are pairwise disjoint and hence in particular~${f \neq h}$. 
	Again if~$f(C_1) \subsetneq C_1$ we may pick~$f$ as the desired~$g$, so assume that~$f(C_1) \supseteq C_2$.
	
	This implies in particular that~$fC_2 \subsetneq hC_2$ which yields that 
	$$h^{-1}f(C_2) \subsetneq C_2$$ which concludes this proof. 
\end{proof}

Note that the automorphism in Lemma~\ref{element infinite order} has infinite order. 
Now we are ready to prove Theorem~\ref{char-two-ended-graph}. 

\begin{proof}[\rm  \bf Proof of Theorem~\ref{char-two-ended-graph}]
	We start with {\bf (i)~$\Rightarrow$ (ii)}.
	
	\noindent So let~$\Gamma$ be a graph fulfilling the conditions in Theorem~\ref{char-two-ended-graph} and let~$\Gamma$ be two-ended.
	Additionally let~$(A,A^\ast)$ be a type 1 separation of~$\Gamma$ given by Lemma~\ref{good separation} and let~$d$ be the diameter of~$\Gamma[A \cap A^\ast]$. 
	Say the ends of~$\Gamma$ are~$\omega_1$ and~$\omega_2$ and set~${C_i \defi C(A\cap A^*, \omega_i)}$.
	By Lemma~\ref{element infinite order} we know that there is an element~${g \in \AutG}$ such that~$g(C_1) \subsetneq C_1$.

	We know that either~$A \cap gA^*$ or~$\A \cap gA$ is not empty, without loss of generality let us assume the first case happens. 
	Now we are ready to define the desired tree-amalgamation. 
	We define the two graphs~$\Gamma_1$ and~$\Gamma_2$  like follows:

	\begin{align*}
	\Gamma_1 \defi \Gamma_2 \defi  \Gamma[A^\ast \cap g A].
	\end{align*}
	Note that as~$A \cap A^\ast$ is finite and because any vertex of any ray in~$\Gamma$ with distance greater than~$3d+1$ from~${A\cap A^\ast}$ is not contained in~$\Gamma_i$ we can conclude~$\Gamma_i$ is a rayless graph.\footnote{Here we use that any ray belongs to an end in the following manner: Since~$A \cap A^*$ and~${g(A \cap A^*)}$ are finite separator  of~$\Gamma$ separating~$\Gamma_1$ from any~$C_i$, no ray in~$\Gamma_i$ can be equivalent to any ray in any~$C_i$ and hence~$\Gamma$ would contain at least three ends.}
	The tree~$T$ for the tree-amalgamation is just a double ray. 
	The families of subsets of~$V(\Gamma_i)$ are just~$A \cap A^\ast$ and~$g(A\cap A^\ast)$ and the identifying maps are the identity. 
	It is straightforward to check that this indeed defines the desired tree-amalgamation.
	The only thing remaining is to check that~$\Gamma_i$ is connected and has finite diameter. 
	It follows straight from the construction and the fact that~$\Gamma$ is connected that~$\Gamma_i$ is indeed connected.
	
	It remains to show that~$\Gamma_i$ has finite diameter. 
	We can conclude this from  Lemma~\ref{smalldiameter} by setting~$S \defi g^{-1}(A \cap A^\ast) \bigcup g^2 (A\cap A^\ast)$.
	As~$\Gamma_i$ is now contained in a rayless component of~$\Gamma \setminus S$.

	\vspace*{0,5cm}
\noindent {\bf (ii)~$\Rightarrow$ (iii)} Let $\,\Gamma= \Gammabar \ast_T \Gammabar\,$, where $\,\Gammabar\,$ is a rayless graph of diameter $\,\,\lambda\,$ and~$T$ is a  double ray. 
As $\,T\,$ is a double ray there are exactly two adhesion sets, say~$\,S_1\,$ and $\,S_2\,$, in each copy of~$\Gammabar$. 
We define~$\hat\Gamma:=\Gammabar\setminus S_2$.
Note that~$\hat\Gamma \neq \emptyset$.
It is not hard to see that~$V(\Gamma)=\bigsqcup_{i\in\Z} V(\Gamma_i)$, where each~$\Gamma_i$ isomorphic to~$\hat\Gamma$.
We now are ready to define our quasi-isometric embedding between~$\Gamma$ and the double ray~${R=\ldots,v_1,v_0,v_1,\ldots}$.
%We contract each vertex and each edge of~$\Gamma_i$ to a vertex.
Define~$\phi\colon V(\Gamma)\to V(R)$ such that~$\phi$ maps every vertex of~$\Gamma_i$ to the vertex~$v_i$ of~$R$.
% and it induces~$\phi\colon V(\Gamma)\cup E(\Gamma)\to V(R)\cup E(R)$ where the edges between two consecutive the subgraphs~$\Sigma_i$ and~$\Sigma_{i+1}$ map to the edge~$v_iv_{i+1}$ of~$R$.
Next we show that~$\phi$ is a quasi-isomorphic embedding.
%We assume that~$\lambda$ is the diameter of~$\Gammabar$.
Let~$v,v'$ be two vertices of~$\Gamma$.
We can suppose that~$v\in V(\Gamma_i)$ and~$v'\in V(\Gamma_j)$, where~$i\leq j$.
One can see that~$d_{\Gamma}(v,v')\leq (|j-i|+1)\lambda$ and so we infer that 
$$\frac{1}{\lambda} d_{\Gamma}(v,v')-\lambda\leq d_R(\phi(v),\phi(v'))=|j-i| \leq \lambda d_{\Gamma}(v,v')+\lambda.$$

As~$\phi$ is surjective we know that~$\phi$ is quasi-dense.
%Let~$v_i$ be an arbitrary vertex of~$R$ and~$v$ be a vertex of~$\Gamma_i$.
%Then it is not hard to see that~$d_R(\phi(v),v_i)\leq \lambda$.
Thus we proved that~$\phi$ is a quasi-isometry between~$\Gamma$ and~$R$.

	\vspace{0,5cm}
	\noindent {\bf (iii)~$\Rightarrow$ (i)} Suppose that~$\phi$ is a quasi-isometry  between~$\Gamma$ and the double ray, say~$R$, with associated constant~$\lambda$.
	We shall show that~$\Gamma$ has exactly two ends, the case that~$\Gamma$ has exactly one end leads to a contradiction in an analogous manner.
	Assume to the contrary that there is a finite subset of vertices~$S$ of~$\Gamma$ such that~$\Gamma\setminus S$ has at least three big components.
	Let~$R_1:=\{u_i\}_{i\in \N}$,~${R_2:=\{v_i\}_{i\in \N}}$  and~${R_3:=\{r_i\}_{i\in \N}}$ be three rays of~$\Gamma$, exactly one in each of those big components. 
	In addition one can see that~$d_{R}(\phi(x_i),\phi(x_{i+1}))\leq 2\lambda$, where~$x_i$ and~$x_{i+1}$ are two consecutive vertices of one of those rays. 
	Since~$R$ is a double ray, we deduce that two infinite sets of~$\phi(R_i) \defi \{\phi(x)\mid x\in R_i\}$ for~$i=1,2,3$ converge to the same end of~$R$.
	Suppose that~$\phi(R_1)$ and~$\phi(R_2)$ converge to the same end.
	For a given vertex~$u_i\in R_1$ let~$v_{j_i}$ be a vertex of~$R_2$ such that the distance~$d_R(\phi(u_i),\phi(v_{j_i}))$ is minimum.
	We note that~$d_R(\phi(u_i),\phi(v_{j_i}))\leq2\lambda$.
	As~$\phi$ is a quasi-isometry we can conclude that~$d_{\Gamma}(u_i,v_{j_i})\leq3\lambda^2$. 
	Since~$S$ is finite, we can conclude that there is a vertex dominating a ray and so we have a dominated end which yields a contradiction.
\end{proof}

\begin{thm}
	\label{thinend}
	Let~$\Gamma$ be a two-ended quasi-transitive graph without dominated ends.
	Then each end of~$\Gamma$ is thin.
\end{thm}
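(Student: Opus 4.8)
The plan is to establish, for each of the two ends $\omega$ of $\Gamma$, a \emph{defining sequence of bounded width}: a defining sequence $(F_i)_{i\in\bbN}$ with $\sup_i|F_i|<\infty$. Once this is in hand, a Menger-type counting argument bounds the number of pairwise disjoint rays in $\omega$ by that width, so $\omega$ is thin.

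The bounded-width defining sequences come directly from the structure already obtained. By Theorem~\ref{char-two-ended-graph}, $\Gamma$ splits as a strongly thin tree-amalgamation $\Gammabar\ast_T\Gammabar$ with $T$ a double ray and $\Gammabar$ a connected rayless graph of finite diameter, all adhesions finite and connected. As in the proof of (ii)~$\Rightarrow$~(iii), we may write $V(\Gamma)=\bigsqcup_{i\in\bbZ}V(\Gamma_i)$, where each $\Gamma_i$ is a copy of $\Gammabar$ with one adhesion deleted, consecutive blocks $\Gamma_i$ and $\Gamma_{i+1}$ share exactly a copy $D_i$ of the (finite, connected) adhesion set, and $\Gamma_i$ is joined by edges only to $\Gamma_{i-1}$ and $\Gamma_{i+1}$. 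Every edge of $\Gamma$ between $\bigcup_{j\le i}V(\Gamma_j)$ and $\bigcup_{j>i}V(\Gamma_j)$ is incident with $D_i$, so $D_i$ separates $\Gamma$ into a left and a right part; moreover all $D_i$ have a common cardinality $k$, and since every vertex lies in a single block the components $C(D_i,\omega_1)=\bigcup_{j>i}V(\Gamma_j)\setminus D_i$ strictly decrease with empty intersection. Hence $(D_i)_{i\ge 0}$ is a defining sequence of one end $\omega_1$ and $(D_{-i})_{i\ge 0}$ of the other end $\omega_2$, both of width $k$. (Alternatively, staying closer to the tools of Section~3: take a type~1 separation $(A,\A)$ from Lemma~\ref{good separation}, put $S\defi A\cap\A$, and let $g\in\AutG$ be the infinite-order automorphism from Lemma~\ref{element infinite order} with $g(C_1)\subsetneq C_1$; then $F_i\defi g^i(S)$ for $i\ge 0$ is a candidate defining sequence for $\omega_1$ of constant width $|S|$, and symmetrically $g^{-i}(S)$ for $\omega_2$.)

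Now fix an end $\omega$ with a bounded-width defining sequence $(F_i)_{i\ge 0}$, $|F_i|\le k$, and write $C_i\defi C(F_i,\omega)$, so $C_{i+1}\subsetneq C_i$ and $\bigcap_i C_i=\emptyset$. Suppose for contradiction that $R_1,\dots,R_{k+1}$ are pairwise disjoint rays in $\omega$. As there are only finitely many of them and $\bigcap_i C_i=\emptyset$, there is an index $i^\ast$ such that no $R_m$ has its first vertex in $C_{i^\ast}$. On the other hand every ray of $\omega$ has a tail in $C(F_{i^\ast},\omega)=C_{i^\ast}$, so each $R_m$ contains vertices of $C_{i^\ast}$ as well as vertices outside $C_{i^\ast}$; since $F_{i^\ast}$ separates $C_{i^\ast}$ from the rest of $\Gamma$, each $R_m$ must meet $F_{i^\ast}$. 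The rays being pairwise disjoint, this produces $k+1$ distinct vertices in $F_{i^\ast}$, contradicting $|F_{i^\ast}|\le k$. Hence $\omega$ contains at most $k$ pairwise disjoint rays, i.e. $\omega$ is thin; as $\omega$ was an arbitrary end, every end of $\Gamma$ is thin.

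The real work is in the first step. Via Theorem~\ref{char-two-ended-graph} it amounts to reading off the chain-of-blocks description of $\Gamma$ and checking that the separators $D_i$ all have the same size and that their nested interiors shrink to the empty set (the latter being automatic, as each vertex lies in exactly one block). If instead one uses the direct construction with $g$, the delicate point becomes verifying $\bigcap_{i\ge 0}g^i(C_1)=\emptyset$; this is where one needs $g$ to have infinite order together with the absence of dominated ends, to rule out a vertex that persists in $g^i(C_1)$ for every $i$. Granting a bounded-width defining sequence, the counting step is routine.
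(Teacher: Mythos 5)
Your proof is correct, but your primary route is genuinely different from the paper's. The paper does not invoke Theorem~\ref{char-two-ended-graph} at all: it takes a type~1 separation $(A,\A)$ from Lemma~\ref{good separation}, uses Lemma~\ref{far away} to find, at each stage, a vertex of a ray far enough from the current separator, and maps $A\cap\A$ onto it by a (freshly chosen) automorphism to obtain a disjoint copy; iterating gives a defining sequence of separators all of size $|A\cap\A|$, from which finiteness of the degree is asserted. Your parenthetical alternative (the sequence $g^i(S)$ for a single infinite-order $g$ from Lemma~\ref{element infinite order}) is the closest to this, and you rightly flag the point the paper glosses over, namely that one must check the nested components actually shrink to the empty set. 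Your main argument instead reads the bounded-width defining sequence off the tree-amalgamation decomposition of Theorem~\ref{char-two-ended-graph}(ii); this is legitimate (that theorem is proved before and independently of Theorem~\ref{thinend}, so there is no circularity) and it is arguably cleaner, at the cost of leaning on the full splitting theorem where the paper only needs Lemmas~\ref{good separation} and~\ref{far away}. One small imprecision: since you take $V(\Gamma)=\bigsqcup_{i\in\bbZ}V(\Gamma_i)$ with one adhesion deleted from each block, consecutive blocks do not literally \emph{share} the set $D_i$; rather $D_i$ sits in one of the two blocks and every edge between $\bigcup_{j\le i}V(\Gamma_j)$ and $\bigcup_{j>i}V(\Gamma_j)$ is incident with it, which is all your argument needs. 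Your explicit Menger-type counting at the end (forcing $k+1$ disjoint rays through a separator of size $k$) spells out what the paper leaves as a one-line assertion, and it also makes clear that you are bounding vertex-disjoint rather than merely edge-disjoint rays; both additions improve on the paper's exposition. Note finally that the paper's proof is deliberately structured so that it can be rerun starting from a separation of order $s(\Gamma)$ to get the subsequent corollary; your decomposition-based route would need the same adaptation there.
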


\begin{proof}
	%Let~$\Gamma$ be quasi-transitive graph with two undominated ends. %, say~$\omega_\ell$ and~$\omega_r$. 
	By Lemma~\ref{good separation} we can find a type 1  separation~$(A,A^\ast)$ of~$\Gamma$. 
	Suppose that the diameter of~${\Gamma[A\cap A^\ast]}$ is equal to~$d$.	
	Let~$C$ be a big component of~${\Gamma \setminus A\cap A^\ast}$.  
	By Lemma~\ref{far away} we can pick a vertex~$r_i$ of the ray~$R$ with distance greater than~$d$ from~$S$. 
	As~$\Gamma$ is quasi-transitive and~${A \cap A^\ast}$ contains an element from of each orbit we can find an automorphism~$g$ such that~${r_i \in g(A \cap A^\ast)}$.
	By the choice of~$r_i$ we now have that
	$$(A \cap A^\ast) \cap g(A \cap A^\ast) = \emptyset.$$ 
	Repeating this process yields a defining sequence of vertices for the end living in~$C$  each of the same finite size. 
	This implies that the degree of the end living in~$C$ is finite. 
\end{proof}	

For a two-ended quasi-transitive graph~$\Gamma$ without dominated ends let~$s(\Gamma)$ be the maximal number of disjoint double rays in~$\Gamma$. By Theorem~\ref{thinend} this is always defined. 
With a slight modification to the proof of Theorem~\ref{thinend} we obtain the following corollary:

\begin{coro}
	Let~$\Gamma$ be a two-ended quasi-transitive graphs without dominated ends.
	Then the degree of each end of~$\Gamma$ is at most~$s(\Gamma)$.
\end{coro}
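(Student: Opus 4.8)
The plan is to run a two-sided, periodic version of the argument behind Theorem~\ref{thinend} and then turn the resulting structure into many double rays. Write $\omega$ for the end under consideration and $\omega'$ for the other end. By Theorem~\ref{thinend} the degree of $\omega$ is finite, so $\Gamma$ has a finite edge cut separating $\omega$ from $\omega'$; let $k$ be the least size of such a cut. Since every ray of $\omega$ meets every finite cut separating $\omega$ from $\omega'$, and edge-disjoint rays meet such a cut in distinct edges, the degree of $\omega$ is at most $k$. So it suffices to exhibit $k$ pairwise edge-disjoint double rays in $\Gamma$, which gives $s(\Gamma)\ge k$ and hence $\deg(\omega)\le k\le s(\Gamma)$.

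For this I would reuse the decomposition from the proof of (i)$\Rightarrow$(ii) in Theorem~\ref{char-two-ended-graph}: by Lemma~\ref{element infinite order} there is $g\in\AutG$ with $g(C)\subsetneq C$, where $(A,A^\ast)$ is a type~1 separation and $C$ is the big component of $A\setminus A^\ast$, and after replacing $g$ by a suitable power (using Lemma~\ref{far away}, exactly as in the proof of Theorem~\ref{thinend}) the translates $S_i:=g^i(A\cap A^\ast)$ are pairwise disjoint and linearly ordered, so that $\Gamma=\bigcup_{i\in\Z}\Gamma_i$ with each $\Gamma_i=g^i(\Gamma_0)$ a finite ``slab'' glued to $\Gamma_{i\pm1}$ along $S_i$ and $S_{i+1}$. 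A minimum edge cut separating $\omega$ from $\omega'$ is finite, hence contained in $\Gamma_{-n}\cup\cdots\cup\Gamma_{n-1}$ for $n$ large; combined with the fact that both $C(S_{-n},\omega')$ and $C(S_n,\omega)$ carry an end, this shows the minimum edge cut between these two sets equals $k$ for $n$ large. Finite Menger then gives, for each large $n$, a system $\mathcal P_n$ of $k$ edge-disjoint paths from $C(S_{-n},\omega')$ to $C(S_n,\omega)$. The trace of $\mathcal P_n$ on any fixed slab $\Gamma_i$ takes only finitely many values (as $\Gamma_i$ is finite), so a standard diagonal/compactness argument extracts from the $\mathcal P_n$ a single subgraph $H\subseteq\Gamma$ whose restriction to every bounded window agrees with the restriction of some $\mathcal P_n$; in particular $H$ consists of $k$ edge-disjoint arcs inside each window and all of its vertices have even degree, so $H$ decomposes into finitely many finite cycles and exactly $k$ pairwise edge-disjoint double rays, each running from $\omega'$ to $\omega$. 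Hence $s(\Gamma)\ge k$, as needed.

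The crux is this final stitching step: finite Menger only yields path systems on bounded windows and these need not be consistent across windows, so a compactness input cannot be avoided; it is the $g$-periodicity of the slabs that keeps the number of possible crossing patterns finite uniformly in $i$, which is what the compactness argument needs. Two minor points also deserve attention. First, ``disjoint double rays'' in the definition of $s(\Gamma)$ should be read as \emph{edge}-disjoint; this is the notion matching the edge-disjoint rays that define the degree of an end, and with vertex-disjointness the stated inequality can genuinely fail. Second, although a two-ended graph may in principle carry double rays both of whose tails lie in the same end, the double rays produced above pass through every slab $\Gamma_i$ and therefore have exactly one tail in each end, so they are precisely the double rays the bound requires.
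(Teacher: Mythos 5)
Your route is genuinely different from the paper's, and it runs into trouble on two fronts. The paper's proof is a short variation of the proof of Theorem~\ref{thinend}: it starts from a \emph{vertex} separation $(B,B^{\ast})$ of order $s(\Gamma)$ whose separator separates the two ends, extends it to a type~1 separation $(A,A^{\ast})$ with $B\cap B^{\ast}\subseteq A\cap A^{\ast}$, translates this by automorphisms to obtain a defining sequence, and then restricts each member of the sequence back down to the image of $B\cap B^{\ast}$; the resulting defining sequence consists of separators of size $s(\Gamma)$, which bounds the degree. No double rays are constructed and no edge cuts appear. Your observation that the paper's literal definitions are mismatched (degree via edge-disjoint rays, $s(\Gamma)$ via ``disjoint'' double rays) is a fair one --- with that mismatch the inequality really can fail, e.g.\ for the Cayley graph of $\Z$ with generating set $\{1,2\}$, whose ends admit three edge-disjoint rays while only two disjoint double rays exist --- but the proof of Theorem~\ref{thinend} only ever bounds the number of \emph{vertex}-disjoint rays by the size of a vertex separator, so the consistent reading intended here is the vertex one on both sides. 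By reinterpreting $s(\Gamma)$ as a maximum number of \emph{edge}-disjoint double rays you prove a different statement, and the heavy Menger-plus-compactness machinery is then doing work that the intended (vertex) version does not require: once one has any separator of order $s(\Gamma)$ separating the ends, translating it finishes the proof. (To your credit, the \emph{existence} of such a separator is exactly the Menger-type step the paper leaves implicit, and your construction is the natural way to make it explicit.)

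The more serious problem is that your argument silently assumes local finiteness, which is not available. The pieces $\Gammabar$ in Theorem~\ref{char-two-ended-graph} are rayless and of finite diameter but may be infinite: attach to each pair $v_i,v_{i+1}$ on a double ray infinitely many common neighbours $w_{i,j}$; the resulting graph is quasi-transitive, two-ended and has no dominated ends. In such a graph the slabs $\Gamma_i=g^i(\Gamma_0)$ are infinite, so the claim that the trace of $\mathcal{P}_n$ on a fixed slab takes only finitely many values fails and the diagonalisation has nothing to work with; moreover the minimum edge cut separating the two ends is infinite, and your derivation of a finite edge cut from Theorem~\ref{thinend} is invalid, since a finite vertex separator whose vertices have infinite degree does not yield a finite edge cut (indeed, in this example each end admits infinitely many edge-disjoint rays, so Theorem~\ref{thinend} itself only holds under the vertex reading of ``degree''). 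A smaller slip: a ray of $\omega$ that starts on the $\omega$-side of a cut need not meet that cut, so ``degree $\le k$'' needs a sequence of cuts marching towards $\omega$ rather than a single minimum cut. For locally finite $\Gamma$ your min-cut/max-flow plan can be carried through and yields a correct proof of the edge version, but as a proof of the corollary as the paper uses it, it both overshoots and leaves the non-locally-finite case open.
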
	

\begin{proof}
	Instead of starting the proof of Theorem \ref{thinend} with an arbitrary separation of finite order we now start with a separation~$(B,B^{\ast})$ of order~$s(\Gamma)$ separating the ends of~$\Gamma$ which we then extend to a connected separation~$(A,A^\ast)$ containing an element of each orbit. 
	The proof then follows identically with only one additional argument.
	After finding the defining sequence as images of~$(A,A^\ast)$, which is too large compared to~$s(\Gamma)$, we can reduce this back down to the separations given by the images of~$(B,B^{\ast})$ because~$(B\cap B^{\ast}) \subseteq (A \cap A^\ast)$ and because~$(B,B^{\ast})$ already separated the ends of~$\Gamma$. 
\end{proof}

It is worth mentioning that Jung \cite{jung1981note} proved that if a connected locally finite quasi-transitive graph has more than one end then it has a thin end.

\subsection{Two-ended graphs with dominated ends}
A natural question that can be raised so far is the following.
What can we say about two-ended quasi-transitive graphs with dominated ends?
An easy example could be a two-ended quasi-transitive graph with only finitely many dominating vertices in such a way that if we remove the dominating vertices, then the rest of the graph is still connected.
In this case, we discard the dominating vertices and then we apply Theorem \ref{char-two-ended-graph}.
So a strongly thin tree-amalgamation is obtained.
Now we again add the removed dominating vertices to the adhesions of the tree-amalgamation and we end up with a strongly thin tree-amalgamation for the graph. 
But examples  are not always as easy as the above the example.
In this section, we will show that we cannot expect that arbitrary two-ended quasi-transitive graphs admit a strongly thin tree-amalgamation let alone  a strongly thin tree-amalgamation satisfying the assumption of Theorem \ref{char-two-ended-graph}.
Indeed we construct a family of two-ended quasi-transitive graphs with dominated ends which do not  admit such splitting introduced in Theorem \ref{char-two-ended-graph}.
However we show that two-ended transitive graphs always admit strongly thin amalgamation.

\begin{example}
	Let~$\Gamma_1$ be an one-ended quasi-transitive graph(for instance take the complete graph~$K_{\aleph_0}$ with~$\aleph_0$ many vertices). 
	We take two copies of $\Gamma_1$ and we identify a vertex of the first copy with a vertex of the second copy.
	We call the graph by $\Gamma_{1}'$
	Take a rayless quasi-transitive graph~$\Gamma$ and join a vertex~$v$ of $\Gamma_1'$ to all vertices of~$\Gamma$.
	We obtain a new graph~$\Lambda'$ which is quasi-transitive and has exactly two ends.
\end{example}	

%For the sake of simplicity we denote the identification of adhesions in a tree-amalgamation via its identification map by the notation~$\cdots\Leftrightarrow\cdots$.
%More precisely if there is the identification map $\phi\colon S\to T$, we denote it by $S\Leftrightarrow T$.
\begin{thm}
The graph~$\Lambda'$ does not admit any strongly thin tree-amalgamation.
\end{thm}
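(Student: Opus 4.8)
The plan is to argue by contradiction: suppose $\Lambda'$ admits a strongly thin tree-amalgamation $\Lambda' = \Gamma_1 \ast_T \Gamma_2$, so by definition $T$ is a double ray, all adhesions are finite, and $\Gamma_1,\Gamma_2$ are rayless. Since $T$ is a double ray, the amalgamation produces a $\Z$-indexed chain of copies, glued along finite adhesion sets; deleting one adhesion set $S$ from $\Lambda'$ leaves two infinite sides, each of which is built from rayless pieces along a ray in $T$. The key structural observation I would extract first is that in such a splitting, every finite adhesion set $S$ is a finite separator of $\Lambda'$ whose removal yields (at least) two big components, one on each ``side'' of $T$; this is essentially the observation already used in the footnote to the proof of Theorem~\ref{char-two-ended-graph}.

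Next I would locate the obstruction inside $\Lambda'$. Recall $\Lambda'$ is obtained from $\Gamma_1'$ — two copies of a one-ended graph $\Gamma_1$ glued at a single vertex, so $\Gamma_1'$ is one-ended — together with a distinguished vertex $v$ joined to every vertex of a rayless quasi-transitive graph $\Gamma$. The end of $\Lambda'$ coming from the ``far side'' of one copy of $\Gamma_1$ in $\Gamma_1'$ is a genuine one-ended direction, and the vertex $v$ (together with the gluing vertex) dominates it: there is no finite vertex set separating $v$ from rays in that direction, because $\Gamma_1$ is one-ended and quasi-transitive and $v$ has infinite degree into $\Gamma$ as well. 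So $\Lambda'$ has a dominated end. I would then argue that a strongly thin tree-amalgamation is incompatible with a dominated end: if $\omega$ is dominated by a vertex $u$, then $u$ has infinitely many internally disjoint paths to any ray of $\omega$, and any finite adhesion set $S$ separating the two ends of $\Lambda'$ must, on the side where $\omega$ lives, fail to separate $u$ from $\omega$ — forcing $u$ itself to lie in all but finitely many of the $\Z$-indexed copies, which contradicts finiteness of the copies (each copy of $\Gammabar$ is rayless, hence in particular $u$ cannot meet infinitely many disjoint finite adhesions while staying in a single rayless piece). More carefully: pick the defining sequence of adhesion sets $S_i$ ($i \to \infty$) marching toward $\omega$ along $T$; domination gives infinitely many $u$–$\omega$ paths meeting only in $u$, so $u \in S_i$ for all large $i$, but the $S_i$ are pairwise disjoint (they sit in distinct copies glued along $T$ a double ray), a contradiction.

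The main obstacle I anticipate is making precise the claim that a strongly thin tree-amalgamation cannot accommodate a dominated end — in particular handling the possibility that the dominating vertex $v$ lies in an adhesion set that is reused (via the $\Z$-action of shifting along $T$) in infinitely many copies, or that $v$ is identified across copies. I would resolve this by using that the adhesions are finite and that $\Gammabar$ is rayless: a dominating vertex has infinite degree, but within any single rayless copy $\Gammabar$ a vertex of infinite degree cannot exist if $\Gammabar$ were, say, locally finite — so instead I would lean on the disjointness/finiteness of the $S_i$ directly rather than on $\Gammabar$'s local structure. An alternative cleaner route, which I would also sketch as a fallback: show directly that $\Lambda'$ is \emph{not} quasi-isometric to the double ray — the subgraph $\Gamma_1'$ contributes a one-ended ``fat'' piece which cannot be coarsely embedded into $\Z$ while respecting distances, because $\Gamma_1 = K_{\aleph_0}$ has pairs of vertices at distance $1$ in arbitrarily large number, collapsing under any quasi-isometry — and then invoke Theorem~\ref{char-two-ended-graph}(iii)$\Leftrightarrow$(ii) to conclude no strongly thin tree-amalgamation of the required form exists. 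However, since the statement asks for \emph{any} strongly thin tree-amalgamation (not merely one satisfying the extra hypotheses of Theorem~\ref{char-two-ended-graph}), the dominated-end argument is the one that must carry the full weight, and that is where I would concentrate the writing.
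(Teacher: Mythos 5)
Your central claim --- that a strongly thin tree-amalgamation is incompatible with a dominated end --- is false, and the paper itself exhibits the counterexample at the start of this very subsection: a two-ended quasi-transitive graph with only finitely many dominating vertices admits a strongly thin tree-amalgamation in which the dominating vertices are simply added to \emph{every} adhesion set. The step of your argument that breaks is the asserted pairwise disjointness of the adhesion sets $S_i$. Nothing in the definition of a (strongly thin) tree-amalgamation forces the two adhesion sets inside a single copy of a factor to be disjoint; when they intersect, the identification maps can propagate a vertex through arbitrarily many consecutive copies, so a dominating vertex $u$ may perfectly well lie in $S_i$ for every $i$. (This is exactly why Theorem~\ref{char-two-ended-graph} lists pairwise disjointness of the adhesions as a separate condition (c) rather than deriving it from thinness.) Consequently ``$u\in S_i$ for all large $i$, but the $S_i$ are disjoint'' does not yield a contradiction, and the dominated-end route cannot carry the proof. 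Your fallback (show $\Lambda'$ is not quasi-isometric to the double ray and cite Theorem~\ref{char-two-ended-graph}) also fails, as you yourself note: that theorem only rules out tree-amalgamations satisfying the extra properties (a)--(c), and moreover it assumes the absence of dominated ends, so it does not apply to $\Lambda'$ at all.

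The obstruction the paper actually uses is thickness, not domination: each end of $\Lambda'$ has infinite degree (the copies of $K_{\aleph_0}$ contain arbitrarily many disjoint pairwise equivalent rays), whereas in a strongly thin tree-amalgamation every ray must leave each rayless copy and hence cross the finite adhesion set sitting between consecutive copies along the double ray $T$. Taking $k$ larger than the sizes of both adhesion sets and choosing $k$ disjoint rays converging to one end, all of them must meet a single adhesion set of size at most $k-1$, a contradiction. That argument is indifferent to whether the adhesion sets are disjoint, which is precisely the hypothesis your version cannot justify.
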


\begin{proof}
Assume to contrary that the graph~$\Lambda'$ admits a strongly thin tree-amalgamation~${\Lambda_1\ast_T\Lambda_2}$, where~$T$ is the double ray.
More precisely assume that~$S_i$ and~$T_i$ are adhesions of~$\Lambda_1$ and~$\Lambda_2$ in the tree-amalgamation, respectively for~$i=1,2$.
In addition let~$S_i$ correspond to~$T_i$ for~$i=1,2$.
We note that~$\Lambda_1$ and~$\Lambda_2$ are rayess graphs.
On the other hand~$\Lambda'$ is a two-ended graph.
So we can conclude that one of adhesions~$S_1$ or~$S_2$ of the tree-amalgamation separating the two ends of~$\Lambda'$ and so one of~$S_i$'s  has to contain~$v$.
Suppose that~$k$ is the maximum number of the sizes of~$S_1$ and~$S_2$ plus 1.
Since each end of~$\Lambda'$ is thick, we are able to find at least~$k$ disjoint  rays belonging to each end.
Pick one of them up and consider these~$k$ disjoint rays in the tree-amalgamation.
We note that every copy of~$\Lambda_1$ is attached to~$\Lambda_2$ via the identification map~$id\colon S_1\to T_1$ and each copy of~$\Lambda_2$ is attached to~$\Lambda_1$ via~$id\colon S_2\to T_2$.
Thus we deduce that the~$k$ disjoint rays being convergent to the end of~$\Lambda'$ meet of~$S_1(T_1)$ or~$S_2(T_2)$ and so we derive a contradiction, as the size of them is at most~$k-1$.
\end{proof}

Next we can ask ourselves what happens if we replace the condition quasi-transitivity with transitivity.
We answer to this question in the following theorem but first we need a lemma.

\begin{lemma}{\rm\cite[Propostion 4.1]{ThomassenWoess}}\label{thomasenwoess}
	Let $\Gamma$ be a connected infinite graph, let $e$ be an edge of $\Gamma$ and $k\in\mathbb N$.
	Then $\Gamma$ has finitely many $k$-tight cut meeting $e$. 
\end{lemma}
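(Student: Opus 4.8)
The statement to prove is Lemma~\ref{thomasenwoess}, which is quoted from~\cite{ThomassenWoess}: a connected infinite graph~$\Gamma$ has only finitely many $k$-tight cuts meeting a fixed edge~$e$. Since this is cited as an external result, a self-contained proof is not strictly required, but here is how I would argue it.

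\medskip

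\textbf{Proof plan.} The plan is to fix the edge $e = xy$ and suppose, for contradiction, that there are infinitely many $k$-tight cuts $C_n = E(A_n, A_n^\ast)$ each containing $e$. First I would normalize: by passing to a subsequence I may assume $x \in A_n$ and $y \in A_n^\ast$ for all $n$, and since a $k$-tight cut has $|C_n| \le k$, there are at most $k$ edges across each partition. The key structural fact is that tightness forces $\Gamma[A_n]$ and $\Gamma[A_n^\ast]$ to be connected, so each $C_n$ is genuinely a ``minimal-looking'' cut. Now consider the ball $B = B_r(x)$ of some fixed radius $r$. Because $\Gamma[A_n]$ is connected and contains $x$, and $\Gamma[A_n^\ast]$ is connected and contains $y$, every vertex of $A_n$ is joined to $x$ inside $A_n$ and every vertex of $A_n^\ast$ is joined to $y$ inside $A_n^\ast$; a path from $x$ to any vertex in $A_n^\ast$ must use an edge of $C_n$.

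\medskip

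The main step is a pigeonhole / Kőnig-type argument. Since each $C_n$ is a set of at most $k$ edges, and each edge of $\Gamma$ lies in only finitely many $C_n$ would be exactly what we want to prove, so instead I would argue by compactness on the edge set. Consider the characteristic functions of the $C_n$ as elements of $\{0,1\}^{E(\Gamma)}$; if infinitely many $C_n$ are distinct but all have size $\le k$, then (again by pigeonhole on which $k$-subset of a growing exhausting sequence of finite edge sets they meet) infinitely many of them must ``escape to infinity'', i.e. for every finite edge set $F$ there is $n$ with $C_n \cap F = \emptyset$ except possibly at $e$. Pick such a $C_n$ with $C_n \setminus \{e\}$ avoiding the ball $B_R(x)$ for $R$ large. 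Then $A_n$ contains the whole of $B_R(x) \setminus y$ (since $\Gamma[A_n]$ is connected, contains $x$, and any vertex at distance $\le R$ reachable without crossing $C_n$ lies in $A_n$), and symmetrically for $y$. Letting $R \to \infty$ along the subsequence, the sets $A_n$ exhaust $V(\Gamma) \setminus \{y\}$ and $A_n^\ast$ exhausts $V(\Gamma)\setminus\{x\}$ — but then for large $n$ the separator-type structure degenerates: $A_n \cap A_n^\ast$ (in the separation language) or the edge-boundary $C_n$ cannot simultaneously contain $k$ edges reaching arbitrarily deep into both sides while $\Gamma$ is connected, contradicting $k$-tightness, which requires every vertex incident to the cut to have neighbours on both sides arbitrarily close to the separator.

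\medskip

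\textbf{Main obstacle.} The delicate point is the last contradiction: turning ``the cuts escape to infinity'' into an actual impossibility. The cleanest route is to use that a $k$-tight cut $C_n$ with $x$-side $A_n$ has the property that $\Gamma[A_n]$ is connected, so deleting $C_n$ leaves at most $k+1$ components total and the $y$-component $\Gamma[A_n^\ast]$ is connected; if all but finitely many $C_n$ avoid a fixed large ball, then for two such cuts $C_m, C_n$ with $A_m \subsetneq A_n$ (which one gets by nesting after passing to a further subsequence, since the $x$-sides are linearly quasi-ordered by ``escaping further''), the finite connected graph between them, $\Gamma[A_n \setminus A_m]$ together with the at most $2k$ boundary edges, would have to be traversed by every one of the infinitely many cuts in between — but each such intermediate cut meets this finite region in at most $k$ edges and they are pairwise distinct, which is impossible once there are more than $\binom{|E(\Gamma[A_n\setminus A_m])| + 2k}{\le k}$ of them. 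Formalizing this counting, together with the subsequence extractions that make the $x$-sides nested, is the real content; everything else is bookkeeping. Since the lemma is in any case invoked as a black box from~\cite{ThomassenWoess}, in the paper itself I would simply cite it and omit the proof.
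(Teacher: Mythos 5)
The paper gives no proof of this lemma: it is imported verbatim as Proposition~4.1 of Thomassen and Woess, so your closing remark that one would simply cite it is exactly what the authors do. Your sketch of an independent proof, however, contains steps that fail. First, the ``escape to infinity'' claim is not what pigeonhole gives you: from infinitely many distinct $k$-sets containing $e$ you can only conclude that, for each finite $F$, all but finitely many $C_n$ have \emph{some} edge outside $F$; a cut may keep a second edge permanently near $e$ while a third wanders off, so you cannot assume $C_n\cap F=\{e\}$. Second, the assertion that $A_n$ then contains all of $B_R(x)\setminus\{y\}$ is false: in the double ray $\dots,x,y,\dots$ the $1$-tight cut $\{xy\}$ leaves half of every ball around $x$ on the $y$-side (indeed your own next clause, ``and symmetrically for $y$'', would force $A_n$ and $A_n^\ast$ to overlap). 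Third, in the fallback argument you assume both that the $x$-sides of infinitely many of the cuts can be made nested and that $\Gamma[A_n\setminus A_m]$ is finite; neither is justified, the latter is simply false for graphs that are not locally finite (and the lemma assumes no local finiteness), and extracting a nested subfamily from an infinite family of $k$-tight cuts is itself a fact of roughly the same depth as the lemma. Finally, your closing appeal to vertices having neighbours on both sides arbitrarily close to the separator belongs to the paper's definition of tight \emph{separations}, not tight \emph{cuts}, for which only $|E(A,A^\ast)|=k$ and connectedness of $\Gamma[A]$ and $\Gamma[A^\ast]$ are required.

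The actual argument of Thomassen and Woess is a short induction on $k$ and needs none of this machinery: for $k=1$ a $1$-tight cut containing $e$ must be $\{e\}$ itself. For $k\ge 2$, connectedness of both sides of the cut forces $A$ and $A^\ast$ to be the two components of $\Gamma-e$ whenever $e$ is a bridge, so the existence of a $k$-tight cut containing $e$ implies $e$ lies on a finite cycle $O$. A cycle meets every cut in an even number of edges, so each $k$-tight cut containing $e$ contains a second edge of $O$; by pigeonhole infinitely many of them would share a fixed second edge $f\in O$, and deleting $e$ turns each of these into a distinct $(k-1)$-tight cut of the connected graph $\Gamma-e$ containing $f$, contradicting the induction hypothesis. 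If you want a self-contained proof to accompany the citation, that is the one to write; it also runs parallel to the induction the paper does carry out for the separation analogue in Lemma~\ref{New Lemma 5}.
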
	

Next we show that every two-ended transitive graph are not allowed to  have any dominated end.

\begin{thm}
Let $\Gamma$ be a two-ended graph with a dominated end such that~$\AutG$ has~$k$ orbits on~$V(\Gamma)$.
Then~$k$ is at least 2.
\end{thm}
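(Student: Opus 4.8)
The plan is to argue by contradiction, assuming that $\Gamma$ is a two-ended \emph{transitive} graph (i.e.\ $k=1$) that has a dominated end $\omega$, and to derive a contradiction from the interplay between vertex-transitivity and the existence of a dominating vertex. First I would fix a vertex $v$ dominating $\omega$; note that $v$ then has infinite degree, and by transitivity \emph{every} vertex of $\Gamma$ has infinite degree and in fact every vertex dominates some end. The key structural fact I would like to extract is that $\Gamma$ admits a tight cut of finite size separating the two ends: since $\Gamma$ has exactly two ends there is a finite vertex set, hence (by standard arguments, e.g.\ passing to the edge boundary of one side) a finite \emph{edge} cut $C=E(B,B^\ast)$ separating the two ends, and one may refine it to a $k$-tight cut for some $k\ge 1$.

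The heart of the argument is then a counting contradiction obtained from Lemma~\ref{thomasenwoess}. Fix an edge $e$ lying in some minimal-size tight cut separating the ends of $\Gamma$, and let $k$ be its size. By Lemma~\ref{thomasenwoess} there are only finitely many $k$-tight cuts of $\Gamma$ meeting $e$. On the other hand, I would use transitivity to produce infinitely many automorphisms $g$ with $gC$ still a $k$-tight cut separating the two ends and with $e\in gC$: concretely, walk along a double ray $R$ representing the two ends, and for infinitely many vertices $r$ on the appropriate side use transitivity to find $g_r\in\AutG$ with $g_r$ carrying an endpoint of $e$ to $r$; because the two ends of $\Gamma$ are preserved as a set by every automorphism, $g_rC$ is again a tight cut of the same size separating the ends, and these cuts meet $e$. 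The contradiction will come once I show that infinitely many of these $g_rC$ are distinct $k$-tight cuts through $e$. The dominated end is what breaks the naive version of this: because $\omega$ is dominated by $v$, the vertex $v$ lies "close to" every separator realizing the thick side, so the images $g_rC$ can in principle coincide — and this is exactly where I expect the main obstacle to be. I would resolve it by choosing $e$ incident with $v$ and exploiting that $v$, having infinite degree, is incident with edges in infinitely many distinct tight cuts; transitivity then spreads this over the whole graph, and one extracts an infinite family of distinct $k$-tight cuts meeting a single fixed edge, contradicting Lemma~\ref{thomasenwoess}.

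An alternative route, which I would keep in reserve in case the cut-counting gets delicate, is the following: if $\Gamma$ were transitive with a dominated end, then by transitivity every vertex has infinite degree, yet a two-ended graph quasi-isometric to the double ray (Theorem~\ref{char-two-ended-graph} does \emph{not} apply here since we have a dominated end, but an independent quasi-isometry argument may still go through) forces "bounded-width" behavior that is incompatible with a single vertex dominating infinitely many edge-disjoint rays of one end while the automorphism group moves that vertex transitively along the double ray. Making this precise would again reduce to a finiteness statement about tight cuts, so Lemma~\ref{thomasenwoess} seems unavoidable.

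The main obstacle, to restate it, is ruling out the degenerate possibility that all the automorphism-images of the chosen tight cut through $e$ collapse to finitely many cuts; handling this requires using the dominating vertex $v$ positively (as a source of infinitely many edges in pairwise-distinct tight cuts) rather than treating it as an obstruction, and it is the step I would spend the most care on.
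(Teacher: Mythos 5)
There is a genuine gap, and it sits exactly at the step you pass over as ``standard'': the existence of a finite edge cut $C=E(B,B^\ast)$ separating the two ends. A finite \emph{vertex} separator exists because the graph is two-ended, but its edge boundary need not be finite when the separating vertices have infinite degree --- and here, by transitivity and the presence of a dominating vertex (which necessarily has infinite degree), \emph{every} vertex has infinite degree. Worse, the object you want provably does not exist in the situation you are in: if a vertex $v$ dominates an end $\omega$ living in the side $B$ of a finite edge cut $E(B,B^\ast)$, then $v$ must lie in $B$, since otherwise infinitely many internally disjoint $v$--$R$ paths (for a ray $R$ of $\omega$ with a tail in $B$) would each have to use one of the finitely many cut edges, which have only finitely many endpoints. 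Hence a vertex dominating \emph{both} ends is incompatible with \emph{any} finite edge cut separating them --- and the paper's proof shows that under transitivity every vertex does dominate both ends. So your argument cannot get off the ground. A secondary problem is the mechanism for the counting contradiction: an automorphism $g_r$ carrying an endpoint of $e$ to a distant vertex $r$ produces a cut $g_rC$ located near $r$, which in general misses $e$ entirely, so these translates do not give infinitely many $k$-tight cuts \emph{through} $e$; and the claim that the dominating vertex is incident with edges lying in infinitely many distinct tight cuts is asserted rather than derived.

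For contrast, the paper's route is different in shape even though it also leans on Lemma~\ref{thomasenwoess}. It partitions $V(\Gamma)$ into the set $W_1$ of vertices dominating one end and the set $W_2$ of those dominating the other (transitivity forces every vertex to dominate something, and if the two sets met, every vertex would dominate both ends). It then shows $E(W_1,W_2)$ can be neither finite --- translating this tight cut by Lemma~\ref{thomasenwoess} wholly into $W_1$ would cut off a small component consisting of dominating vertices --- nor infinite, since a finite vertex separator between the ends cannot absorb it. The conclusion is that every vertex dominates both ends, which contradicts the existence of a finite vertex separator directly. If you want to salvage your approach, you would essentially have to reprove this dichotomy first, at which point the cut-counting step becomes unnecessary.
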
	

\begin{proof}
Assume to contrary that~$k=1$ and so~$\Gamma$ is a transitive graph.
Consider a vertex~$v\in V(\Gamma)$.
We claim that $v$ dominates both ends of $\Gamma$.
Suppose not: We can divide the vertex set into two sets.
Let $W_1$ be the set of vertices dominating one end and let $W_2$ be the rest of vertices which must dominate the other end.
We note that $W_1$ does not intersect with $W_2$.
Otherwise the intersection is not empty and the graph is transitive.
So every vertex dominates both ends.
Now take a finite separator separating two ends.
Since each vertex of the graph dominates both ends, we have a contradiction.
Hence we assume that $W_1\cap W_2=\emptyset$.
We note that $V(\Gamma)=V(W_1)\sqcup V(W_2)$.
If the number of edges $E(W_1,W_2)$ is finite, then $E(W_1,W_2)$ forms a tight cut.
Because  if $W_i$ is not connected, then a component of $W_i$ must be big for $i=1,2$ and the rest of components are small.
The small components contain dominating vertices and it yields a contradiction.
Thus $W_i$ is connected for $i=1,2$ and $E(W_1,W_2)$ forms a tight cut.
On the other hand $\Aut(\Gamma)$ is infinite and by Lemma \ref{thomasenwoess} we are able find a $g\in \Aut(\Gamma)$ such that $E(W_1,W_2)$ does not touch $gE(W_1,W_2)$ and $gE(W_1,W_2)\subseteq G[W_1]$.
Thus $gE(W_1,W_2)$ divide  $W_1$ into at least two subgraphs in such a way that one of them is small.
But each vertex of $W_1$ is dominating vertex and it yields a contradiction $|gE(W_1,W_2)|$ is finite.
Hence  $E(W_1,W_2)$ is infinite.
There is a finite separator $S$ in $\Gamma$ separating the ends.
Without of loss of generality assume that $S\subseteq W_1$.
Let $C_1$ and $C_2$ be the big components of $\Gamma\setminus S$ containing $\omega_L$ and $\omega_R$ respectively.
Furthermore we may assume that $C_2$ contains $E(W_1,W_2)$ and $\omega_L$ lives in $W_1$ and $\omega_R$ lives in $W_2$.
So there is a vertex of $W_1$ in $C_2$ and this vertex dominates the end $\omega_L$.
Therefore we derive a contradiction, as $S$ is a finite separator and infinitely many edges need to go through it to reach to $\omega_L$.
Hence the claim is proved.

As~$v$ dominates both ends of~$\Gamma$, there must be infinitely many edges crossing through any separator separating the ends that yields a contradiction.
So~$\Gamma$ cannot be a transitive graph and so~$k\geq 2$, as desired.
\end{proof}
Now the above theorem implies the following nice corollary which is the characterization of two-ended transitive graphs.

\begin{coro}
Let~$\Gamma$ be a connected transitive graphs.
Then the following statements are equivalent:
\begin{enumerate}[\rm (i)]
\item~$\Gamma$ is two-ended.
\item~$\Gamma$ can be split as a strongly thin tree-amalgamation.
\item~$\Gamma$ is quasi-isometric to the double ray. 
	\end{enumerate}
\end{coro}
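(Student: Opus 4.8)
The plan is to derive this corollary by combining Theorem~\ref{char-two-ended-graph} with the preceding theorem stating that a two-ended transitive graph cannot have a dominated end. The key observation is that transitivity is a special case of quasi-transitivity (with exactly one orbit), so once we know that a two-ended transitive graph has no dominated ends, we are immediately in a position to invoke Theorem~\ref{char-two-ended-graph} with $\Gamma$ being transitive.

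First I would establish the implication (i)~$\Rightarrow$~(ii). Assume $\Gamma$ is a connected transitive two-ended graph. By the theorem immediately preceding this corollary, if $\Gamma$ had a dominated end then $\AutG$ would need at least two orbits on $V(\Gamma)$, contradicting transitivity. Hence $\Gamma$ has no dominated ends, and $\Gamma$ is in particular quasi-transitive. Applying Theorem~\ref{char-two-ended-graph}, the equivalence (i)~$\Leftrightarrow$~(ii) of that theorem gives that $\Gamma$ splits as a strongly thin tree-amalgamation $\Gammabar\ast_T\Gammabar$ with all the additional properties listed there; in particular it is a strongly thin tree-amalgamation, which is exactly (ii) here.

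Next, (ii)~$\Rightarrow$~(iii). A strongly thin tree-amalgamation means $T$ is the double ray and both factors are rayless; this is precisely the hypothesis needed in the (ii)~$\Rightarrow$~(iii) part of the proof of Theorem~\ref{char-two-ended-graph}, where the explicit quasi-isometry $\phi$ to the double ray is constructed by collapsing each copy of the (finite-diameter) factor to a vertex of $R$. One subtlety: in the statement of this corollary, condition (ii) only asserts "strongly thin" without the finite-diameter clause a) of Theorem~\ref{char-two-ended-graph}. I would address this by noting that in the transitive (hence quasi-transitive, dominated-end-free) setting the factor $\Gammabar$ is automatically of finite diameter — this follows from Lemma~\ref{smalldiameter} exactly as in the proof of (i)~$\Rightarrow$~(ii), since $\Gammabar$ sits inside a rayless component of $\Gamma$ minus a suitable finite connected orbit-hitting separator. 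With finite diameter in hand, the argument of (ii)~$\Rightarrow$~(iii) of Theorem~\ref{char-two-ended-graph} applies verbatim.

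Finally, (iii)~$\Rightarrow$~(i) is literally the (iii)~$\Rightarrow$~(i) implication of Theorem~\ref{char-two-ended-graph}, whose proof only uses the quasi-isometry to the double ray (plus the fact that a vertex dominating a ray would contradict the finiteness of a separator, which needs no transitivity assumption beyond what is already there); so it transfers without change. The main obstacle I anticipate is the bookkeeping around the slightly weaker phrasing of condition (ii): one must make sure that "strongly thin" together with transitivity really does force the finite-diameter property, so that (ii)~$\Rightarrow$~(iii) goes through — everything else is a direct appeal to Theorem~\ref{char-two-ended-graph} and the no-dominated-end theorem.
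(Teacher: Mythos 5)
Your overall strategy is the one the paper intends: the corollary is placed immediately after the theorem showing that a two-ended graph with a dominated end cannot be vertex-transitive, and the (unwritten) proof is to use that theorem to discharge the ``no dominated ends'' hypothesis and then quote Theorem~\ref{char-two-ended-graph}. Your step (i)~$\Rightarrow$~(ii) is correct, and you are right to notice that condition (ii) of the corollary is weaker than condition (ii) of Theorem~\ref{char-two-ended-graph}, so the finite-diameter property has to be recovered before the quasi-isometry can be built.

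There is, however, a circularity in the way you close the cycle. Both of your remaining implications quietly presuppose that $\Gamma$ has no dominated ends: in (ii)~$\Rightarrow$~(iii) you invoke Lemma~\ref{smalldiameter}, whose hypotheses are ``two-ended \emph{and} without dominated ends,'' and in (iii)~$\Rightarrow$~(i) the paper's argument reaches its contradiction precisely by producing a dominated end and contradicting the standing no-dominated-ends hypothesis of Theorem~\ref{char-two-ended-graph} --- a hypothesis the corollary does not have. In both cases the only tool for excluding dominated ends is the preceding theorem, which applies only to \emph{two-ended} transitive graphs, i.e.\ only once (i) is already in hand; so as ordered, (ii)~$\Rightarrow$~(iii) and (iii)~$\Rightarrow$~(i) each assume (i). The repair is to prove (ii)~$\Rightarrow$~(i) directly first: in a strongly thin tree-amalgamation the adhesion sets are finite, any finitely many consecutive parts glued along finite sets form a rayless subgraph (a ray there would have only finitely many transition vertices between parts and hence a tail inside one rayless part), so every ray crosses cofinally many adhesion sets in one of the two directions of the connecting double ray and $\Gamma$ has exactly two ends; only then does transitivity plus the preceding theorem exclude dominated ends, after which Theorem~\ref{char-two-ended-graph} yields the forward directions. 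The direction (iii)~$\Rightarrow$~(i) likewise needs a genuine extra argument for transitive graphs rather than a verbatim transfer, since the dominated end it produces is not yet known to be impossible at that stage.
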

%%%%%%%%%%%%%%%%%%%%%%%%%%%%%%%%%%%%%%%%%%%%%%%%%%%%%%%%
%%%%%%%%%%%%%%%%%%%%%%%%%%%%%%%%%%%%%%%%%%%%%%%%%%%%%%%%

\section{Groups acting on two-ended graphs}
\label{action}
In this section we investigate the action of groups on two-ended graphs without dominated ends with finitely many orbits.
We start with the following lemma which states that there are only finitely many~$k$-tight separations containing a given vertex. 
Lemma~\ref{New Lemma 5} is a separation version of a result of Thomassen and Woess for vertex cuts~\cite[Proposition 4.2]{ThomassenWoess} with a proof which is quite closely related to their proof.

\begin{lemma}\label{New Lemma 5}
	Let~$\Gamma$ be a two-ended graph without dominated ends then for any vertex~$v \in V(\Gamma)$ there are only finitely many~$k$-tight separations containing~$v$.
\end{lemma}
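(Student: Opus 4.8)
The goal is to bound the number of $k$-tight separations of $\Gamma$ that contain a fixed vertex $v$. The plan is to mimic the argument of Thomassen and Woess for vertex cuts \cite[Proposition 4.2]{ThomassenWoess}, adapting it to the separation setting. First I would fix a tight separation $(B,B^\ast)$ of $\Gamma$ — which exists since $\Gamma$ is two-ended — with separator $B\cap B^\ast$, and note that any other tight separation $(A,A^\ast)$ must, on account of both separators separating the two ends $\omega_1,\omega_2$, interact with $(B,B^\ast)$ in a controlled way: the separator $A\cap A^\ast$ of a $k$-tight separation has bounded size $k$ and each of its vertices sends edges into both sides, so $A\cap A^\ast$ cannot wander arbitrarily far into either big component without creating too many edge-disjoint paths across the $k$-separator, contradicting $k$-tightness (this is where the "no dominated ends" hypothesis and Lemma~\ref{far away}/Theorem~\ref{thinend} enter, guaranteeing ends are thin so the relevant edge-connectivity is finite).

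\medskip

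\textbf{Key steps.} (1) Reduce to showing that every $k$-tight separation containing $v$ has its separator inside a fixed \emph{finite} vertex set $W$ depending only on $v$, $k$ and $(B,B^\ast)$; once this is established, there are at most $\binom{|W|}{k}$ choices of separator, and each separator of a tight separation determines the separation (up to swapping the two sides) because the two big components are prescribed by where $\omega_1,\omega_2$ live — so finiteness follows. (2) To produce $W$: let $(A,A^\ast)$ be $k$-tight with $v\in A\cap A^\ast$. Since $A\cap A^\ast$ separates $\omega_1$ from $\omega_2$ and has size $k$, and since by Theorem~\ref{thinend} each end is thin with degree bounded by some $s(\Gamma)$, the separator $A\cap A^\ast$ must meet every system of disjoint double rays realizing the ends; in particular it lies within bounded distance of $B\cap B^\ast$. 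Quantitatively: if a vertex $u\in A\cap A^\ast$ were at distance $>f(k)$ from $B\cap B^\ast$ inside the big component $C_B$ of $B^\ast\setminus B$ (say), then — using that $u$ is adjacent into both components $C_A,C_{A^\ast}$ of $A\setminus A^\ast$ and $A^\ast\setminus A$, so $u$ lies "between" the two ends — we could route $k+1$ edge-disjoint paths from one end to the other avoiding a $k$-vertex obstruction, or else deduce a vertex of infinite degree dominating an end (as in the proof of Lemma~\ref{far away}), a contradiction in either case. Hence every such $u$ lies in the ball $W:=\{x\in V(\Gamma)\mid d(x,B\cap B^\ast)\le f(k)\}$, which is finite because $\Gamma$ has no dominated ends and so balls around finite sets that meet both big components... — more carefully, one takes $W$ to be the (finite) union of $B\cap B^\ast$ with the vertices of the small components of $\Gamma\setminus(B\cap B^\ast)$ together with an $f(k)$-ball into each big component, and argues this ball is finite using that ends are thin (bounded degree) plus quasi-transitivity is \emph{not} assumed here, so one instead uses directly that a ball of finite radius around a finite set, intersected with the requirement of lying on a separator separating the ends, is finite — precisely the content one extracts from the thin-end structure.

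\medskip

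\textbf{Main obstacle.} The delicate point — and where I expect the real work to be — is step (2): showing the separators of $k$-tight separations cannot escape to infinity, i.e. establishing the bound $f(k)$ on the distance from a reference separator. In the vertex-cut setting of Thomassen–Woess this uses a Menger-type counting argument (a cut far from another cut of the same small size lets one build too many disjoint paths); here the translation to separations requires care because separators of a separation need not be "minimal" cuts, and one must use condition (4) in the definition of tight separation (each separator vertex adjacent into both sides) to pin the separator down between the ends. I would handle this exactly as in the proof of Lemma~\ref{far away}: build a forest of shortest paths from a far-away putative separator vertex back toward a reference set, obtain an infinite bounded-diameter component, extract a vertex of infinite degree, and conclude it dominates an end — contradicting the hypothesis that $\Gamma$ has no dominated ends. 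The rest is bookkeeping: counting subsets of the resulting finite set $W$ and observing that the separator plus the labelling of the two sides by $\omega_1,\omega_2$ recovers the separation.
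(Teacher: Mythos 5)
Your proposal takes a genuinely different route from the paper, and it has gaps that I do not think can be closed without essentially redoing the argument. The paper proves Lemma~\ref{New Lemma 5} by induction on $k$ with an argument that is entirely local around $v$: since $v$ dominates no end, one finds finite sets $S_1\subseteq C_1$ and $S_2\subseteq C_2$ separating $v$ from the two ends, joins all pairs $x\in S_1$, $y\in S_2$ by paths in $\Gamma-v$ to obtain a finite vertex set $V_P$, observes that the separator of any $k$-tight separation containing $v$ must meet $V_P$ (otherwise $v$ would still be adjacent to both big components beyond the $S_i$, contradicting their choice), and then invokes the induction hypothesis for $(k-1)$-tight separations at the finitely many vertices of $V_P$. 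No global reference separation, no metric balls, and no quasi-transitivity enter.

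Your approach instead tries to confine the separators of all $k$-tight separations containing $v$ to a bounded-radius set $W$ around a fixed reference separator, and this runs into two concrete problems. First, Lemma~\ref{New Lemma 5} does not assume quasi-transitivity, but the tools you lean on --- Lemma~\ref{far away}, Theorem~\ref{thinend} and the quantity $s(\Gamma)$ --- are proved in the paper only for quasi-transitive graphs, so thinness of the ends and the ``arbitrarily distant vertices'' lemma are not available here. Second, even granting those tools, the finiteness of $W$ is precisely where your argument trails off: $\Gamma$ need not be locally finite, so a ball of radius $f(k)$ around a finite set can be infinite, the small components of $\Gamma\setminus(B\cap B^\ast)$ can be infinite and infinitely numerous, and the central quantitative claim (that a separator vertex at distance greater than $f(k)$ from $B\cap B^\ast$ forces $k+1$ disjoint paths or a dominating vertex) is asserted rather than proved. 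Since that claim carries the whole content of the lemma in your formulation, the proposal as written is not a proof; I would switch to the local inductive argument on $k$.
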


\begin{proof}
	We apply induction on~$k$. 
	The case~$k =1$ is trivial.
	So let~$k \geq 2$ and let~$v$ be a vertex contained in the separator of a~$k$-tight separation~$(A,A^\ast)$.
	Let~$C_1$ and~$C_2$ be the two big components of~$\Gamma \setminus (A \cap A^\ast)$.
	As~$(A,A^\ast)$ is a~$k$-tight separation we know that~$v$ is adjacent to both~$C_1$ and~$C_2$.
	We now consider the graph~$\Gamma^- \defi \Gamma -v$. 
	As~$v$ is not dominating any ends we can find a finite vertex set~$S_1 \subsetneq C_1$ and~$S_2 \subsetneq C_2$ such that~$S_i$ separates~$v$ from the end living in~$C_i$ for~$i \in \{1,2\}$.\footnote{A finite vertex set~$S$ separates a vertex~$v \notin S$ from an end~$\w$ if~$v$ is not contained in the component~$G \setminus S$ which~$\w$ lives.}
	For each pair~$x,y$ of vertices with~$x \in S_1$ and~$y \in S_2$ we now pick a~$x-y$ path~$P_{xy}$ in~$\Gamma^-$.
	This is possible as~$k \geq 2$ and because~$(A,A^\ast)$ is~$k$-tight. 
	Let~$\Pcal$ be the set of all those paths and let~$V_P$ be the set of vertices contained in the path contained in~$\Pcal$.
	Note that~$V_P$ is finite because each path~$P_{xy}$ is finite and both~$S_1$ and~$S_2$ are finite. 
	By the hypothesis of the induction we know that for each vertex in~$V_P$ there are only finitely~$(k-1)$-tight separations meeting that vertex. 
	So we infer that there are only finitely many~$(k-1)$-tight separations of~$\Gamma^-$ meeting~$V_P$.
	Suppose that there is a~$k$-tight separation~$(B,\B)$ such that~$v \in B \cap \B$  and~$B \cap \B$ does not meet~$V_P$. 
	As~$(B,\B)$ is~$k$-tight we know that~$v$ is adjacent to both big components of~$\Gamma \setminus B \cap \B$. 
	But this contradicts our choice of~$S_i$. 
	Hence there are only finitely many~$k$-tight separations containing~$v$, as desired.
\end{proof}

In the following we extend the notation of diameter from connected graphs to not necessarily connected graphs. 
Let~$\Gamma$ be a graph.
We denote the set of all subgraphs of~$\Gamma$ by~$\mathcal P(\Gamma)$.
We define the function~$\rho\colon\mathcal P(\Gamma)\to\Z \cup \{\infty\}$ by setting~$\rho(X)=\mathsf{sup}\{\mathsf{diam}(C)\mid C\text{ is a component of } X\}$.\footnote{If the component~$C$ does not have finite diameter, we say its diameter is infinite.}

\begin{lemma}\label{finitecomplement}
	Let~$\Gamma$ be a quasi-transitive two-ended graph without dominated ends such that  $|\mathsf{St}(v)|<\infty$ for every vertex $v$ of $\Gamma$ and let~$(A,A^*)$ be a tight separation of~$\Gamma$.
	Then for infinitely many~${g\in \mathsf{Aut}(\Gamma)}$ either the number~$\rho({A\Delta gA})$ or~$\rho(A\Delta gA)^c$ is finite.
\end{lemma}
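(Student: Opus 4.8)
The statement to prove is Lemma~\ref{finitecomplement}: given a quasi-transitive two-ended graph~$\Gamma$ without dominated ends, with all vertex stabilizers finite, and a tight separation~$(A,A^*)$, for infinitely many $g \in \Aut(\Gamma)$ either $\rho(A\Delta gA)$ or $\rho((A\Delta gA)^c)$ is finite. The plan is to first pass from the tight separation to a type 2 separation via Corollary~\ref{type 2 separation}, so that we may as well assume $\Gamma[A\cap A^*]$ is finite, connected, and contains an element of each orbit, with exactly one big component $C_1$ on the $A$ side and one big component $C_2$ on the $A^*$ side. Write $S \defi A\cap A^*$, $d \defi \diam(\Gamma[S])$. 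By Lemma~\ref{element infinite order} there is $g_0 \in \Aut(\Gamma)$ of infinite order with $g_0(C_1)\subsetneq C_1$; the idea is to use the powers $g_0^n$ (and possibly their conjugates/translates by automorphisms witnessing quasi-transitivity) as the infinitely many desired automorphisms.

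The second step is the geometric heart: for $g = g_0^n$ with $n$ large, I want to understand $A\Delta gA$. Since $g_0$ "shifts toward $\omega_1$", the separators $S, g_0 S, g_0^2 S, \ldots$ march off toward the end $\omega_1$, and by Lemma~\ref{far away} together with the argument in the proof of Theorem~\ref{thinend} these separators become pairwise disjoint once $n$ is large enough relative to $d$. For such $n$, the set $gA = g_0^n A$ contains $A$ (because $g_0^n(C_1)\subsetneq C_1$ pushes the "$A$-side" inward), so $A\Delta gA = gA \setminus A$ is exactly the portion of $\Gamma$ lying "between" the separator $S$ and the separator $g_0^n S$ — more precisely the vertex set of $\Gamma[A^*\cap g_0^n A]$ together with a controlled boundary. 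This is precisely the kind of region handled by Lemma~\ref{smalldiameter}: it is a rayless component (or union of finitely many small components) of $\Gamma$ minus the disjoint union $S \cupdot g_0^n S$, and both $S$ and $g_0^n S = g_0^n(S)$ are connected images of $S$, each meeting every orbit. Hence Lemma~\ref{smalldiameter} applies and gives $\rho(A\Delta gA) < \infty$ for all sufficiently large $n$; symmetrically, if instead $g_0^{-n}$ is used (or if the containment goes the other way), one gets $\rho((A\Delta gA)^c)<\infty$. Either way we produce infinitely many such $g$, one for each large $n$.

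The role of the hypothesis $|\mathsf{St}(v)|<\infty$ for all $v$ deserves a remark in the writeup: it guarantees that the automorphisms $g_0^n$ are genuinely distinct as maps (an infinite-order element cannot collapse infinitely many powers, but more to the point the finiteness of stabilizers, combined with quasi-transitivity, is what prevents pathologies where "shifting" an automorphism fixes an unbounded set and thereby makes $A\Delta gA$ or its complement have an infinite-diameter component of a different flavour). Concretely I would note that $\{g_0^n : n\in\Z\}$ is an infinite subset of $\Aut(\Gamma)$ and that for distinct large $n$ the separations $g_0^n(A,A^*)$ are distinct, which is all we need for "infinitely many $g$".

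**Main obstacle.** The delicate point is verifying that $A\Delta gA$ (or its complement) is literally a union of \emph{rayless} pieces cut off by \emph{two} disjoint connected orbit-meeting separators, so that Lemma~\ref{smalldiameter} applies verbatim. One must check: (a) that for large $n$ the separators $S$ and $g_0^n S$ are disjoint — this is exactly Lemma~\ref{far away} plus the observation that $g_0$ has infinite order and pushes toward $\omega_1$; (b) that the region $A\Delta gA$ contains no ray — this follows because any ray belongs to an end, both ends are separated from this region by the finite sets $S$ and $g_0^n S$, so a ray inside would force a third end, contradicting two-endedness (the same footnote argument as in the proof of Theorem~\ref{char-two-ended-graph}, direction (i)$\Rightarrow$(ii)); and (c) that $g_0^n S$, being an automorphic image of the connected orbit-meeting set $S$, is again connected and meets every orbit, so the hypothesis "$S = S_1\cupdot S_2$ with each $S_i$ connected and orbit-meeting" of Lemma~\ref{smalldiameter} is met with $S_1 = g_0^{-1}S$-type choices as in the proof of (i)$\Rightarrow$(ii) of Theorem~\ref{char-two-ended-graph} (there one took $S \defi g^{-1}(A\cap A^*)\cup g^2(A\cap A^*)$; the analogous bookkeeping here needs care to keep the two pieces disjoint, which is again bought by Lemma~\ref{far away} for large $n$). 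Once these three points are in place, Lemma~\ref{smalldiameter} immediately yields the finiteness of $\rho$, and the proof is complete.
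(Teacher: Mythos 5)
Your argument proves the statement as literally written, but it takes a genuinely different route from the paper's, and the difference matters downstream. The paper produces its automorphisms implicitly: it first combines Lemma~\ref{New Lemma 5} with the finiteness of the stabilizers to show that $(A,A^\ast)$ and $g(A,A^\ast)$ are \emph{nested} for all but finitely many $g\in\Aut(\Gamma)$, and then, for any $g$ with $(A\cap A^\ast)\cap g(A\cap A^\ast)=\emptyset$, it sandwiches $A\Delta gA$ (or its complement) between two fresh type~1 separators pushed deep into the two big components via Lemma~\ref{far away}, so that Lemma~\ref{smalldiameter} applies. You instead manufacture the automorphisms explicitly as the powers $g_0^n$ of the infinite-order element supplied by Lemma~\ref{element infinite order}, and use $A\cap A^\ast$ and $g_0^n(A\cap A^\ast)$ themselves as the two separators fed into Lemma~\ref{smalldiameter}; both routes rest on that lemma in the end. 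What the paper's route buys is the stronger conclusion that the dichotomy holds for \emph{all but finitely many} $g$, which is what is actually invoked later (for instance in Lemma~\ref{index 2 subgroup} the statement is applied to arbitrary coset representatives $g_1,g_2$, not to powers of one fixed translation), so your version, though it establishes the lemma as stated, would not serve as a drop-in replacement. What your route buys is concreteness and the fact that it essentially bypasses Lemma~\ref{New Lemma 5}. Two points in your sketch need more care than you give them. First, the disjointness of $A\cap A^\ast$ from $g_0^n(A\cap A^\ast)$ for large $n$ does not follow from Lemma~\ref{far away}; the clean argument is exactly where the finite-stabilizer hypothesis enters: if a vertex $v$ of the finite set $A\cap A^\ast$ lay in $g_0^n(A\cap A^\ast)$ for infinitely many $n$, the pigeonhole principle would put some nontrivial power of $g_0$ into $\mathsf{St}(v)$, forcing $g_0$ to have finite order. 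Second, passing to the type~2 extension replaces $A$ by a set differing from it by only finitely many vertices, so you must record that this perturbs $A\Delta gA$ by a finite set before transferring the finiteness of $\rho$ back to the originally given tight separation.
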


\begin{proof}
	It follows from Lemma \ref{New Lemma 5} and $|\Gamma_v|<\infty$ that~$(A,A^\ast)$ and~$g(A,A^\ast)$ are nested for all but finitely many~$g\in\AutG$.
	Let~$g \in \AutG$ such that
	$${(A\cap A^\ast) \cap g(A \cap A^\ast) = \emptyset}.$$
	By definition we know that either~${A\Delta gA}$ or~$({A\Delta gA})^c$ contains a ray.
	Without loss of generality we may assume the second case. 
	The other case is analogous. 
	We now show that the number~${\rho(A\Delta gA)}$ is finite. 
	Suppose that~$C_1$ is the big component of~$\Gamma\setminus (A\cap A^\ast)$ which does not meet~$g(A\cap A^\ast)$ and~$C_2$ is the big component of~$\Gamma\setminus g(A\cap A^\ast)$ which does not meet~$(A\cap A^\ast)$.
	By Lemma~\ref{far away} we are able to find type 1 separations~$(B,\B)$ and~$(C,\C)$ in such a way that~${B\cap\B\subsetneq C_1}$ and~${C\cap \C\subsetneq C_2}$ and such that the~$B \cap \B$ and~$C \cap \C$ each have empty intersection with~$A \cap \A$ and~$g(A \cap \A)$. 
	Now it is straightforward to verify that~${A\Delta gA}$ is contained in a rayless component~$X$ of~${\Gamma \setminus \left( (B \cap \B)\bigcup (C \cap\C)\right )}$.
	Using Lemma~\ref{smalldiameter} we can conclude that~$X$ has finite diameter and hence~${\rho(A \Delta gA)}$ is finite.
\end{proof}

Assume that an infinite group~$G$ acts on a two-ended graph~$\Gamma$ without dominated ends with finitely many orbits and let~$(A,A^\ast)$ be a tight separation of~$\Gamma$.
By Lemma~\ref{finitecomplement} we may assume~$\rho(A\Delta gA)$ is finite.
We set
~$$H:=\{g\in G\mid \rho(A\Delta gA) < \infty \}.$$
We call~$H$ the \emph{separation subgroup} induced by~$(A,A^\ast)$.\footnote{See the proof of Lemma~\ref{index 2 subgroup} for a proof that~$H$ is indeed a subgroup.}
In the sequel we study separations subgroups.
We note that we infer from Lemma \ref{finitecomplement} that~$H$ is infinite.

\begin{lemma}\label{index 2 subgroup}
	Let~$G$ be  an infinite group acting on a two-ended graph~$\Gamma$ without dominated ends with finitely many orbits almost freely. 
	Let~$H$ be the separation subgroup induced by a tight separation~$(A,A^\ast)$ of~$\Gamma$.
	Then~$H$ is a subgroup of~$G$ of index at most~$2$.
\end{lemma}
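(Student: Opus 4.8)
The plan is to show that $H$ is a subgroup and that its index is at most $2$ by analyzing how elements of $G$ act on the separation $(A,A^\ast)$. First I would fix notation: since $(A,A^\ast)$ is a tight separation, the two ends $\omega_L,\omega_R$ of $\Gamma$ live in components $C_A\subseteq A\setminus A^\ast$ and $C_{A^\ast}\subseteq A^\ast\setminus A$ respectively. For any $g\in G$, the separation $g(A,A^\ast)=(gA,gA^\ast)$ is again a tight separation (as noted in the preliminaries), so by Lemma~\ref{New Lemma 5} together with the almost-freeness hypothesis, $(A,A^\ast)$ and $g(A,A^\ast)$ are nested for all but finitely many $g$. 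For those nested $g$, one of the four inclusions $gA\subseteq A$, $gA\supseteq A$, $gA\subseteq A^\ast$, $gA\supseteq A^\ast$ holds; I would argue that $\rho(A\Delta gA)<\infty$ exactly when $gA$ is ``on the same side'' as $A$ (i.e.\ $gA\subseteq A$ or $A\subseteq gA$, equivalently $g$ does not swap the two ends), and $\rho((A\Delta gA)^c)<\infty$ in the other case. The key point, established essentially in Lemma~\ref{finitecomplement}, is that exactly one of $\rho(A\Delta gA)$, $\rho((A\Delta gA)^c)$ is finite (they cannot both be finite, since $\Gamma$ has a ray; and the nested case forces at least one).

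Next I would verify $H$ is a subgroup. The identity clearly lies in $H$. For closure, suppose $g,h\in H$; I want $\rho(A\Delta ghA)<\infty$. Write $A\Delta ghA \subseteq (A\Delta gA)\cup (gA\Delta ghA) = (A\Delta gA)\cup g(A\Delta hA)$. Since $g(A\Delta hA)$ is the image under an automorphism of a set all of whose components have bounded diameter, it too has all components of bounded diameter (automorphisms preserve distances), and a union of two such sets — being contained in the union of their vertex sets — still has the property that it meets only finitely many ``large'' regions; more carefully, $A\Delta ghA$ is contained in a set with no ray, hence (being moreover separated from both ends by finite separators coming from $A\cap A^\ast$, $gA\cap gA^\ast$, $ghA\cap ghA^\ast$) it lies in a rayless subgraph, and Lemma~\ref{smalldiameter} applies to give finite diameter of each component. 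For inverses, $A\Delta g^{-1}A = g^{-1}(gA\Delta A) = g^{-1}(A\Delta gA)$, again an automorphic image of a set with bounded-diameter components, so $g^{-1}\in H$. Thus $H\le G$.

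Finally, for the index bound: I would fix some $g_0\in G$ with $\rho((A\Delta g_0A)^c)<\infty$ — such $g_0$ exists unless $H=G$, in which case the index is $1$ and we are done. (If no such $g_0$ exists, every $g$ satisfies $\rho(A\Delta gA)<\infty$, so $H=G$.) Then I claim $G = H\cup g_0H$, which gives $[G:H]\le 2$. Given any $g\in G$, consider $g_0^{-1}g$. Using the same union/complementation bookkeeping as above — $A\Delta (g_0^{-1}g)A$ relates to $g_0^{-1}(A\Delta gA)$ and $g_0^{-1}(A\Delta g_0 A)= g_0^{-1}(A\Delta g_0A)$, whose complement has bounded diameter — one shows that if $\rho((A\Delta gA)^c)<\infty$ then $\rho(A\Delta (g_0^{-1}g)A)<\infty$, i.e.\ $g_0^{-1}g\in H$, so $g\in g_0H$. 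Concretely: ``which side of the separation $g$ sends $A$ to'' is a function $G\to\{+,-\}$ that, on the cofinitely many nested elements, is a homomorphism to $\Z/2\Z$ with kernel $H$; the finitely many non-nested elements must also fall into these two cosets since $H$ has finite index would be automatic, but cleaner is to just do the direct coset computation.

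The main obstacle I expect is the careful handling of the finitely many group elements $g$ for which $(A,A^\ast)$ and $g(A,A^\ast)$ are \emph{not} nested — for these, neither ``side'' is immediately defined, and one must still show $g$ lands in $H$ or $g_0H$. I would resolve this by replacing $(A,A^\ast)$ at the outset with a type 2 / suitably large tight separation, or by passing to a high power $g^n$ (which is nested, since $g$ has only finitely many ``bad'' conjugate-separations and powers of a fixed automorphism eventually move $A\cap A^\ast$ far away by Lemma~\ref{far away}), determining the side of $g^n$ and hence of $g$. The bookkeeping with symmetric differences and the repeated invocation of Lemma~\ref{smalldiameter} to convert ``raylessness'' into ``finite diameter'' is routine but must be done with care to ensure the relevant regions are genuinely rayless, which is where the finiteness of $A\cap A^\ast$ and its translates is used.
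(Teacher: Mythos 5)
Your proposal follows essentially the same route as the paper's proof: closure under multiplication via the symmetric-difference identity relating $A\Delta ghA$ to $A\Delta gA$ and $g(A\Delta hA)$, inverses via automorphism-invariance of $\rho$, and the index bound by using the dichotomy of Lemma~\ref{finitecomplement} together with complementation to show that any two elements with $\rho\bigl((A\Delta gA)^c\bigr)<\infty$ lie in the same coset. The paper phrases that last step as a contradiction arising from three distinct cosets rather than as $G=H\cup g_0H$, but the computation is identical, and your explicit appeal to Lemma~\ref{smalldiameter} to convert raylessness of $A\Delta ghA$ into finiteness of $\rho$ is, if anything, slightly more careful than the paper's.
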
	

\begin{proof}
	We first show that~$H$ is indeed a subgroup of~$G$. 
	As automorphisms preserve distances it is that for~$h \in H,g \in G$ we have 
	$$\rho (g(A\Delta hA))=\rho (A\Delta hA)<\infty.$$ 
	As this is in particular true for~$g = h^{-1}$ we only need to show that~$H$ is closed under multiplication and this is straightforward to check as one may see that 
	\begin{align*}
	A\Delta h_1h_2A& =(A\Delta h_1A)\Delta (h_1A\Delta h_1h_2A)\\
	& =(A\Delta h_1A)\Delta h_1(A\Delta h_2A) .
	\end{align*}
	Since~$\rho(A\Delta h_iA)$ is finite for~$i=1,2$, we conclude  that~$h_1h_2$ belongs to~$H$.
	
	Now we only need to establish that~$H$ has index at most two in~$G$.
	Assume that~$H$ is a proper subgroup of~$G$ and that the index of~$H$ is bigger than two.
	Let~$H$ and~$Hg_i$ be three distinct cosets for~${i=1,2}$.
	Furthermore by Lemma~\ref{finitecomplement} we may assume~${\rho((A \Delta g_i A})^c)$ is finite for~$i=1,2$ . 
	Note that
	$$A\Delta g_1g^{-1}_2A=(A\Delta g_1A)\Delta g_1(A\Delta g_2^{-1}A).$$
	On the other hand we already know that
	$$A\Delta g_1g^{-1}_2A=(A\Delta g_1A)^c\Delta (g_1(A\Delta g_2^{-1}A))^c.$$ 
	We notice that the diameter of~$A\Delta g_i A$ is infinite for~$i=1,2$. 
	Since~$g_2 \notin H$ we know that~$g_2^{-1} \notin H$ and so~$\rho(g_1(A\Delta g_2^{-1}A))$ is infinite. 
	By Lemma~\ref{finitecomplement} we infer that~$\rho(g_1(A\Delta g_2^{-1}A)^c)$ is finite. 
	Now as the two numbers~${\rho((A\Delta g_1A)^c)}$ and~${\rho( g_1(A\Delta g_2^{-1}A)^c)}$ are finite we conclude that~${\rho A \Delta g_1g_2^{-1}A < \infty}$.
	Thus we conclude that~$g_1g_2^{-1}$ belongs to~$H$.
	It follows that~${H = H g_1g_2^{-1}}$ and multiplying by~$g_2$ yields~${Hg_1 = Hg_2}$ which contradicts~$H g_1 \neq H g_2$. 
\end{proof}	 

We now are ready to state the main theorem of this section.

%{\Large The following proof is wrong or missing things}
\begin{theorem}\label{cyclic finite index}
	Let~$G$ be a group acting with only finitely many orbits on a two-ended graph~$\Gamma$ without dominated ends almost freely.
	Then~$G$ contains an infinite cyclic subgroup of finite index.
\end{theorem}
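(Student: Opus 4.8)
The plan is to leverage the machinery already developed: given the action of $G$ on $\Gamma$ with finitely many orbits and finite stabilizers, I would first fix a tight separation $(A,A^\ast)$ of $\Gamma$ (which exists since $\Gamma$ is two-ended), and pass to the separation subgroup $H=\{g\in G\mid \rho(A\Delta gA)<\infty\}$. By Lemma~\ref{index 2 subgroup}, $H$ has index at most $2$ in $G$, so it suffices to show that $H$ contains an infinite cyclic subgroup of finite index; indeed, since being virtually-$\Z$ passes up through finite-index overgroups, this would immediately give the conclusion for $G$. Since $H$ is infinite (noted after Lemma~\ref{finitecomplement}) and still acts with finitely many orbits, finite stabilizers, and without dominated ends on $\Gamma$, I may as well assume $G=H$ from the start, i.e.\ that $\rho(A\Delta gA)<\infty$ for \emph{all} $g\in G$.

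The heart of the argument is to produce an element of infinite order acting as a ``translation'' along the two ends, mimicking Lemma~\ref{element infinite order}. First I would extend $(A,A^\ast)$ to a type~$1$ separation and use Lemma~\ref{far away} together with Lemma~\ref{New Lemma 5} (finiteness of $k$-tight separations through a vertex) and the finiteness of stabilizers to find $g\in G$ with $(A\cap A^\ast)\cap g(A\cap A^\ast)=\emptyset$ and $gC_1\subsetneq C_1$, where $C_1,C_2$ are the two big components of $\Gamma\setminus(A\cap A^\ast)$; such a $g$ has infinite order since its powers push the separator monotonically toward one end. Let $K=\langle g\rangle\cong\Z$. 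The remaining task is to bound the index $[G:K]$. For this I would show that the ``slab'' $\Gamma[A^\ast\cap gA]$ between the separator and its $g$-image is a rayless graph of finite diameter — this is exactly the situation of Lemma~\ref{smalldiameter}, applied with $S=g^{-1}(A\cap A^\ast)\cup g^2(A\cap A^\ast)$, just as in the proof of (i)$\Rightarrow$(ii) of Theorem~\ref{char-two-ended-graph}. A finite-diameter graph with finite stabilizers and finitely many orbits has finitely many vertices; call this finite vertex set $F$. Every vertex of $\Gamma$ lies in some translate $g^nF$ (the translates tile $V(\Gamma)$ up to the overlap on separators), so picking orbit representatives $v_1,\dots,v_m$ of $G$ on $V(\Gamma)$, each $v_j$ lies in some $g^{n_j}F$, and then for any $x\in V(\Gamma)$ with $x=h v_j$ we can adjust by a power of $g$ to move $x$ into $F$; combined with $|\mathsf{St}(v_j)|<\infty$ this yields that $K$ has finitely many cosets in $G$. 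Hence $K\cong\Z$ has finite index in $G$, and then in the original group as well.

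The main obstacle is the index bound $[G:K]<\infty$: producing the infinite-order element is essentially a repackaging of Lemma~\ref{element infinite order}, but controlling the index requires knowing that $\Gamma$ is ``quasi-isometric to $\Z$ with uniformly bounded fibers,'' i.e.\ that the slab between consecutive translates of the separator is not just rayless but genuinely finite. That finiteness uses Lemma~\ref{smalldiameter} to get finite diameter, and then crucially the hypothesis that all vertex stabilizers are finite together with finitely many orbits — a finite-diameter quasi-transitive graph is automatically finite under these conditions, because each orbit meets any ball of bounded radius in boundedly many vertices. Once the slab is finite, a standard ``fundamental domain'' counting argument closes the proof. One should be slightly careful that the $G$-orbits, not merely the $\mathsf{Aut}(\Gamma)$-orbits, are used consistently throughout — in particular that the type~$1$ separation and the element $g$ can be chosen inside $G$, which is fine since $G$ acts with finitely many orbits and $\rho(A\Delta gA)<\infty$ for every $g\in G$ by our reduction.
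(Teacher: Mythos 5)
Your overall strategy is close in spirit to the paper's: both proofs extract an infinite-order ``translation'' $h$ via Lemma~\ref{element infinite order}, observe that $\Gamma=\bigcup_{i\in\Z}h^iX$ for the slab $X=\Gamma[A\Delta hA]$, and then bound the index of $\langle h\rangle$ by showing that only boundedly much of $G$ can ``fit'' inside one slab. But there is a genuine gap at the step you yourself identify as the crux: the claim that \emph{a finite-diameter graph with finitely many orbits and finite stabilizers has finitely many vertices, because each orbit meets any ball of bounded radius in boundedly many vertices}. That general principle is false. Take the complete graph on vertex set $\Z$ with $\Z$ acting by translation: one orbit, trivial stabilizers, diameter $1$, infinitely many vertices. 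The graphs in the theorem are not assumed locally finite (only the ends are undominated), so balls of bounded radius may be infinite and may a priori contain infinitely many vertices of a single orbit; nothing in your argument rules this out, and without it the ``fundamental domain'' count at the end does not get off the ground.

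The missing ingredient is exactly Lemma~\ref{New Lemma 5}, which is what the paper uses in its Case~I/Case~II analysis in place of your ball-counting assertion. The correct route to finiteness of the relevant data in a slab is: if infinitely many group elements $h_i$ (say, from distinct cosets of $\langle h\rangle$) send the finite separator $Z=A\cap A^*$ into a fixed slab $X$, then all these translates $h^{-j_i}h_iZ$ are tight separations lying in $X$; each of them separates the two ends and hence must meet a fixed finite path $p$ from $Z$ to $hZ$; by Lemma~\ref{New Lemma 5} each vertex of $p$ lies in only finitely many $k$-tight separations, so infinitely many of the sets $h^{-j_i}h_iZ$ coincide; since $Z$ is finite, infinitely many of the elements $h^{-j_i}h_i$ then agree on a single vertex of $Z$, producing an infinite stabilizer and contradicting the almost-free hypothesis. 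This is precisely where the hypotheses ``tight separation'', ``two ends'', and ``almost freely'' enter; your version substitutes for them a metric statement that is not available in this generality. If you replace your finiteness claim for the slab by this separation-counting argument (or equivalently, run the coset argument directly on $Z$-translates rather than on vertices of the slab), your proof becomes essentially the paper's.
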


\begin{proof} 
	Let~$(A,A^*)$ be a tight separation and let~$(\bar{A}, \bar{A}^\ast)$ be the type 2 separation given by Corollary~\ref{type 2 separation}.
	Additionally let~$H$ be the separation subgroup induced by~$(A,A^\ast)$. 
	We now use Lemma~\ref{element infinite order} on~$(\bar{A},\bar{A}^*)$ to find an element~${h \in G}$ of infinite order.
	It is straightforward to check that~$h \in H$. 
	Now it only remains to show that~$L \defi \langle h \rangle$ has finite index in~$H$. 
	
	Suppose for a contradiction that~$L$ has infinite index in~$H$ and for simplicity set~$Z:= A\cap A^*$.
	This implies that~$H = \bigsqcup_{i \in \bbN} L h_i$. 
	We have the two following cases:\\
	{\bf{Case I:}} There are infinitely many~${i\in \N}$ and~${j_i\in \N}$  such that~${h_iZ=h^{j_i}Z}$ and so~${Z=h^{-j_i}h_iZ}$.
	It follows from Lemma \ref{New Lemma 5} that there are only finitely many~$f$-tight separations meeting~$Z$ where~$|Z|=f$.
	We infer that there are infinitely many~$k\in \N$ such that~$h^{-j_\ell}h_{\ell}Z=h^{-j_k}h_kZ$ for a specific~$\ell\in\N$.
	Since the size of~$Z$ is finite, we deduce that there is a~$v\in Z$ such that for a specific~$m\in \N$ we have~$h^{-j_m}h_{m}v=h^{-j_n}h_nv$ for infinitely many~$n\in\N$.
	So we are able to conclude that the stabilizer of~$v$ is infinite which is a contradiction.
	Hence for~$n_i\in \N$ where~$i=1,2$ we have to have 
	$$(h^{-j_m}h_{m}^{-1})h^{-j_{n_1}}h_{n_1}=({h^{-j_m}}h_m)^{-1}h^{-j_{n_2}}h_{n_2}.$$
	The above equality implies that~$Lh_{n_1}=Lh_{n_2}$ which yields a contradiction.
	\newline
	{\bf{Case II:}} We suppose that there are only finitely many~${i\in \N}$ and~${j_i\in \N}$  such that~$h_iZ=h^{j_i}Z$.
	We are going to define the graph~$X \defi \Gamma[A \Delta hA]$ and we conclude that~${\Gamma=\cup_{i\in\Z} h^iX}$.
	We can assume that~${h_iZ\subseteq h^{j_i}X}$ for infinitely many~$i\in N$ and~$j_i\in \N$ and so we have~$h^{-j_i}h_iZ\subseteq X$.
	Let~$p$ be a shortest path between~$Z$ and~$hZ$.
	For every vertex~$v$ of~$p$, by Lemma \ref{New Lemma 5} we know that there are finitely many tight separation~$gZ$ for~$g\in G$ meeting~$v$.
	So we infer that there are infinitely many~$k\in \N$ such that~$h^{-j_\ell}h_{\ell}Z=h^{-j_k}h_kZ$ for a specific~$\ell\in\N$.
	Then with an analogue method we used for the preceding case, we are able to show that the stabilizer of at least one vertex of~$Z$ is infinite and again we conclude that~$(h^{-j_m}h_{m}^{-1})h^{-j_{n_1}}h_{n_1}=({h^{-j_m}}h_m)^{-1}h^{-j_{n_2}}h_{n_2}$ for~$n_1,n_2\in \N$.
	Again it yields a contradiction. Hence each case gives us a contradiction and it proves our theorem as desired.
                \end{proof}

We close the paper with the following corollary which is an immediate consequence of the above theorem and Theorem \ref{Classifiication}. 

\begin{coro}
Let~$G$ be an infinite group acting with only finitely many orbits on a two-ended graph~$\Gamma$ without dominated ends almost freely.
Then~$G$ is two-ended.\qed
\end{coro}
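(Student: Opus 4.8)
The statement to prove is the final corollary: if $G$ acts with finitely many orbits on a two-ended graph $\Gamma$ without dominated ends almost freely, then $G$ is two-ended. This should follow from Theorem~\ref{cyclic finite index} together with Theorem~\ref{Classifiication}.

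The plan is straightforward. First I would invoke Theorem~\ref{cyclic finite index} to obtain an infinite cyclic subgroup $L = \langle h \rangle \leq G$ of finite index. The main point to establish is that $G$ is finitely generated: since $L \cong \Z$ is finitely generated and $[G:L] < \infty$, the group $G$ is finitely generated (a finite-index overgroup of a finitely generated group is finitely generated — one can take generators of $L$ together with coset representatives). Once $G$ is known to be finitely generated and to contain an infinite cyclic subgroup of finite index, condition (iii) of Theorem~\ref{Classifiication} is satisfied, hence condition (i) holds, i.e. $G$ is two-ended.

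The only subtlety is making sure the hypotheses line up: Theorem~\ref{cyclic finite index} requires $G$ to act with finitely many orbits on a two-ended graph without dominated ends almost freely, which is exactly what the corollary assumes (with the added hypothesis that $G$ is infinite, which is needed so that $L$, and hence $G$, is genuinely two-ended rather than finite). I would spell out explicitly that $G$ infinite plus $[G:L]<\infty$ forces $L$ infinite, so $L \cong \Z$ and Theorem~\ref{Classifiication}(iii)$\Rightarrow$(i) applies.

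I do not expect any real obstacle here — the corollary is a packaging of Theorem~\ref{cyclic finite index} and the classical classification in Theorem~\ref{Classifiication}. If anything, the one thing worth stating carefully is the elementary group-theoretic fact that a group with a finitely generated finite-index subgroup is itself finitely generated, since the excerpt does not record it as a numbered lemma; but this is standard and needs at most one sentence.

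\begin{proof}
By Theorem~\ref{cyclic finite index}, $G$ contains an infinite cyclic subgroup $L=\langle h\rangle$ of finite index. Pick coset representatives $g_1,\dots,g_n$ of $L$ in $G$; then $\{h,g_1,\dots,g_n\}$ generates $G$, so $G$ is finitely generated. Since $G$ is infinite and $[G:L]<\infty$, the subgroup $L$ is infinite, hence $L\cong\Z$ is an infinite cyclic subgroup of finite index in $G$. Thus $G$ satisfies condition~(iii) of Theorem~\ref{Classifiication}, and therefore condition~(i), i.e.\ $G$ is two-ended.
\end{proof}
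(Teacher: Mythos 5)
Your proof is correct and is exactly the argument the paper intends: the corollary is stated as an immediate consequence of Theorem~\ref{cyclic finite index} and the equivalence (iii)$\Leftrightarrow$(i) of Theorem~\ref{Classifiication}, and you have simply made explicit the (standard) intermediate step that a finite-index overgroup of a finitely generated group is finitely generated. No gap; the only redundancy is re-deriving that $L$ is infinite, which Theorem~\ref{cyclic finite index} already asserts.
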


%%%%%%%%%%%%%%%%%%%%%%%%%%%%%%%%%%%%%%%%%%%%%%%%%%%%%%%%
%%%%%%%%%%%%%%%%%%%%%%%%%%%%%%%%%%%%%%%%%%%%%%%%%%%%%%%%

\bibliographystyle{plain}
\bibliography{collective.bib}

   \end{document}